\newcommand{\Rmnum}[1]{\expandafter\@slowromancap\romannumeral #1@}
\newtheorem{theorem}{Theorem}[section]
\newtheorem{lemma}{Lemma}[section]
\newtheorem{proposition}{Proposition}[section]
\newtheorem{remark}{Remark}[section]
\newcommand{\N}{{\mathbb N}}
\newcommand{\C}{{\mathbb C}}
\newcommand{\R}{{\mathbb R}}
\newcommand{\bean}{\begin{eqnarray*}}
	\newcommand{\eean}{\end{eqnarray*}}
\newcommand{\sbr}[1]{\left(#1\right)}
\newcommand{\mbr}[1]{\left[#1\right]}
\newcommand{\lbr}[1]{\left\{#1\right\}}
\numberwithin{equation}{section}
\begin{document}
	\theoremstyle{plain}

	\title{\bf  Sign-changing  solution  for logarithmic elliptic  equations with critical exponent\thanks{Supported NSFC(No.12171265,11025106). E-mail addresses: liuth19@mails.tsinghua.edu.cn (T. H. Liu),    zou-wm@mail.tsinghua.edu.cn (W. M. Zou)} }
	
	\date{}
	\author{
		{\bf Tianhao Liu, \; Wenming Zou}\\
		\footnotesize \it  Department of Mathematical Sciences, Tsinghua University, Beijing 100084, China.\\
	}


	\maketitle
	
	\begin{center}
		\begin{minipage}{120mm}
			\begin{center}{\bf Abstract }\end{center}
			
	 In this paper, we	consider the logarithmic elliptic  equations with critical exponent
			\begin{equation}
				\begin{cases}
					-\Delta u=\lambda u+
					|u|^{2^*-2}u+\theta  u\log u^2, \\
					u \in H_0^1(\Omega), \quad  \Omega \subset \R^N.
				\end{cases}
			\end{equation}
			Here,  the parameters $N\geq 6$, $\lambda\in \R$, $\theta>0$ and  $ 2^*=\frac{2N}{N-2} $ is the Sobolev critical exponent.  We prove the existence of sign-changing solution with exactly two nodal domain for  an  arbitrary  smooth bounded domain $\Omega\subset \mathbb{R}^{N}$. When $\Omega=B_R(0)$ is a ball, we  also construct infinitely many radial sign-changing solutions with  alternating signs and prescribed nodal characteristic.

			\vskip0.23in
			
			{\bf Key words:} Schr\"odinger system, 	Br\'ezis-Nirenberg  problem,  Critical exponent, Logarithmic perturbation, Sign-changing solution

			\vskip0.23in					
		\end{minipage}
	\end{center}

	\section{Introduction}\label{Sect1}
	Consider the following  logarithmic elliptic  equations with critical exponent
	\begin{equation} \label{System1}
		\begin{cases}
			-\Delta u=\lambda u+
			|u|^{2^*-2}u+\theta  u\log u^2, \\
			u \in H_0^1(\Omega), \quad  \Omega \subset \R^N.
		\end{cases}
	\end{equation}
	Here, $\Omega\subset \mathbb{R}^{N}$ is a smooth bounded domain, $N\geq 3$, $\lambda\in \R$, $\theta>0$ and  $ 2^*=\frac{2N}{N-2} $ is the Sobolev critical exponent. Equation \eqref{System1} is closely related to the following time-dependent nonlinear logarithmic type Schr\"odinger  equation
	\begin{equation} \label{System5}
		\begin{cases}
			\imath \partial_t \Psi	=\Delta \Psi +
			|\Psi|^{2^*-2}\Psi+\theta \Psi\log |\Psi|^2 , \quad x\in \Omega,~ t>0,\\
			\Psi=\Psi(x,t)\in \C,\quad i=1,                                                                                                                    2\\
			\Psi(x,t)=0,\quad  x\in \partial \Omega,\quad t>0,
		\end{cases}
	\end{equation}
	where $\imath$ is the imaginary unit.  System \eqref{System5} appears in many physical fields, such as   quantum mechanics, quantum optics,
	nuclear physics, transport and diffusion phenomena,  open quantum systems, effective quantum gravity, theory
	of superfluidity and Bose-Einstein condensation. We  refer the readers to the papers \cite{Alfaro=DPDE=2017,Bialynicki=1975,Bialynicki=1976,Carles=2018,Carles=2014,Colin=2004,Poppenberg=2002}  for a survey on the related physical backgrounds.
	
	\vskip0.12in
	
	Equation \eqref{System1} can be also regarded as a logarithmic perturbation of  the classical 	Br\'ezis-Nirenberg  problem
	\begin{equation} \label{System3}
		-\Delta u=\lambda u+
		|u|^{2^*-2}u , ~~
		u \in H_0^1(\Omega), ~  \Omega \subset \R^N.
	\end{equation}
	Br\'ezis and Nirenberg, in a remarkable paper \cite{Brezis-Nirenberg1983},   proved that  the problem  \eqref{System3} has  a positive  solution if $ 0<\lambda<\lambda_{1}(\Omega) $ for $N\geq 4$ and $  \lambda^*(\Omega)<\lambda<  \lambda_{1}(\Omega)$ for $N=3$,  where $\lambda^*(\Omega)\in \sbr{0,\lambda_{1}(\Omega)}$. In \cite{Cerami-Solimini-Struwe 1986,Tarantello=1992,Zhang=NA=1989}, sign-changing solutions were obtained for $ 0<\lambda<\lambda_{1}(\Omega) $ and  $N\geq 6$. However, the situations are different for the lower dimensional problem. A well-known result  was presented in \cite{Atkinson-Brezis-Peletier}, demonstrating that when  $3\leq N\leq 6$ and $\Omega $ is  a unit ball, problem \eqref{System3} has no radial sign-changing  solution   for small   $0<\lambda<\lambda_{1}(\Omega)$. In \cite{Roselli-Willem 2009}, the authors proved that equation \eqref{System3} has a least energy sign-changing solution for $N=5$ and the parameter $\lambda$ is slightly smaller than $\lambda_{1}(\Omega)$.
	For more results related to the Br\'ezis-Nirenberg problem, we refer to \cite{Clapp-Weth 2005,Schechter-Zou}.

	\vskip0.12in

	Equation \eqref{System1} is associated with the energy functional $\mathcal{L}:H_0^1(\Omega) \to \R$ defined by
	
	\begin{equation} \label{Fun L}
		\begin{aligned}
			\mathcal{L}(u)&:=\frac{1}{2}\int_{\Omega} |\nabla u|^2-\frac{\lambda}{2}\int_{\Omega}|u|^2-\frac{1}{2^*} \int_{\Omega}|u|^{2^*}-\frac{\theta}{2}\int_{\Omega}u^2(\log u^2-1).
		\end{aligned}
	\end{equation}
	Obviously,  any   critical point of $\mathcal{L}$ corresponds to a solution of the equation \eqref{System1}. 	
	Recently, Deng et al.\cite{Deng-He-Pan=Arxiv=2022} investigated the existence of  positive  solutions for the equation  \eqref{System1} in a bounded domain via variational  method.
	They proved that the equation \eqref{System1} has  a positive  solution if $\lambda\in \R$, $\theta>0$ and $N\geq 4$. Moreover, they obtained the existence and nonexistence results under  other different conditions, we refer the readers to \cite{Deng-He-Pan=Arxiv=2022} for more details. 	
	In a particular case $\Omega=\R^N$, the existence of positive solutions and sign-changing solutions has been  studied in many situations,  we refer the readers to \cite{Cazenave=1983=NA,Deng=MAA=2021,D'Avenia=CCM=2014,W.Shuai=Nonlineariyu=2019,Squassina=2015=CVPDE,Wang=ARMA=2019,Zhang=JMPA=2020} and reference therein.

	

	\vskip0.12in
	The main focus of this paper is to  investigate the existence of sign-changing solutions for equation \eqref{System1} with logarithmic terms in a {\it bounded domain.}
	The logarithmic term   $u\log u^2$ has some special properties. It is easy to see that
	\begin{equation}
		\lim\limits_{u\to 0^+} \frac{u\log u^2}{u}=-\infty, \quad \lim\limits_{u\to +\infty} \frac{u\log u^2}{|u|^{2^*-2}u}=0,
	\end{equation}
	that is, $ u=o( u\log u^2)$ for $u$ very close to 0. Comparing with the critical term $|u|^{2^*-2}u$, the logarithmic term $u\log u^2$ is a lower-order term at infinity. Also, the sign of the logarithmic term changes depending on  $u$.  Moreover, the presence of logarithmic term  makes the structure of the corresponding functional complicated. 	In the following, we will demonstrate that   the  logarithmic terms in equation \eqref{System1}   presents some significant challenges and has a crucial influence on the existence of sign-changing solutions.
	
	Now we give some notations.	We define the Nehari set by
	\begin{equation}\label{Nehari manifold}
		\mathcal{N} :=\lbr{u\in  H_0^1(\Omega) :  u\not \equiv 0, ~ \mathcal{G}(u):=\mathcal{L}^\prime(u)u=0 },
	\end{equation}
	and the sign-changing  Nehari set by
	\begin{equation}\label{Nehari manifold signchaning}
		\mathcal{M}: =\lbr{u\in  \mathcal{N} :  u^+\in \mathcal{N} , u^-\in \mathcal{N}  },
	\end{equation}
	where   $u^+:=\max\lbr{u,0}$ and $u^-:=\min \lbr{u,0}$.
	It is easy to see that $\mathcal{N} $ and $	\mathcal{M} $ are nonempty, then we can denote
	\begin{equation} \label{Defi of energy }
		\mathcal{C} := \inf_{u\in \mathcal{N} } \mathcal{L}(u), \quad 	\mathcal{B} := \inf_{u\in \mathcal{M} } \mathcal{L}(u).
	\end{equation}
	Our main result is the following.
	\begin{theorem}\label{Theorem 1}
		Suppose that  $N\geq 6$ and $\lambda\in \R,\theta>0$, then $\mathcal{B} $ is  achieved. Moreover, 	if $u\in \mathcal{M} $ with $\mathcal{L}(u)=\mathcal{B} $, then $u$ is a sign-changing solution of equation \eqref{System1} with exactly two nodal domains.
	\end{theorem}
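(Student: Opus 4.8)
The plan is to realize $\mathcal{B}$ as a constrained minimum over $\mathcal{M}$ and to recover the compactness destroyed by the critical exponent through a strict energy estimate. I begin with the algebraic reductions. Since $u^+$ and $u^-$ have disjoint supports, every term of $\mathcal{L}$ splits, so that $\mathcal{L}'(u)u^\pm=\mathcal{G}(u^\pm)$ and $\mathcal{L}(u)=\mathcal{L}(u^+)+\mathcal{L}(u^-)$; in particular $u\in\mathcal{M}$ iff $u^+,u^-\in\mathcal{N}$. Subtracting $\tfrac12\mathcal{G}$ from $\mathcal{L}$ gives the identity
\[
\mathcal{L}(u)=\mathcal{L}(u)-\tfrac12\mathcal{G}(u)=\frac1N\int_\Omega|u|^{2^*}+\frac\theta2\int_\Omega u^2,\qquad u\in\mathcal{N},
\]
so $\mathcal{L}$ is positive on $\mathcal{N}$ and the constraint controls $\|u\|_{2^*}$ and $\|u\|_2$. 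Next I record the fibering structure: for fixed $w\ne0$ the function $t\mapsto t^{-2}\mathcal{G}(tw)$ is strictly decreasing from $+\infty$ to $-\infty$ on $(0,\infty)$, since the term $-2\theta\|w\|_2^2\log t$ dominates as $t\to0^+$ and $-t^{2^*-2}\|w\|_{2^*}^{2^*}$ as $t\to\infty$. Hence there is a unique $t(w)>0$ with $t(w)w\in\mathcal{N}$, and $t(w)$ is the unique maximizer of $t\mapsto\mathcal{L}(tw)$. Applying this to $u^+$ and $u^-$ separately (they decouple, so the associated $2\times2$ Jacobian is diagonal and nondegenerate) produces, for every sign-changing $u$, a unique pair $(s,t)$ with $su^++tu^-\in\mathcal{M}$, and the minimax description $\mathcal{B}=\inf_{u^\pm\ne0}\max_{s,t>0}\mathcal{L}(su^++tu^-)$. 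A refinement of the same monotonicity yields that $\mathcal{N}$ is bounded away from $0$ in $H_0^1(\Omega)$ and $\mathcal{C}>0$.

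Now take a minimizing sequence $u_n\in\mathcal{M}$. The energy identity bounds $\|u_n^\pm\|_{2^*}$ and $\|u_n^\pm\|_2$, and since $\int v^2\log v^2\le C(1+\|v\|_{2^*}^{2^*})$ (splitting $\{v^2\le1\}$ and $\{v^2>1\}$) the Nehari relation then bounds $\|\nabla u_n\|_2$; thus $(u_n)$ is bounded. The main obstacle is the loss of compactness caused by the critical term, and I expect the crux of the whole argument to be the strict inequality
\[
\mathcal{B}<\mathcal{C}+\frac1N S^{N/2},
\]
where $S$ is the best Sobolev constant. To prove it I take a positive least-energy solution $u_0$ attaining $\mathcal{C}$ (which exists by \cite{Deng-He-Pan=Arxiv=2022}), glue in a rescaled, truncated Aubin–Talenti bubble $PU_\varepsilon$ of opposite sign concentrating at an interior point, and estimate $\max_{s,t>0}\mathcal{L}(su_0-tPU_\varepsilon)$. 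The gradient and critical contributions reproduce $\mathcal{C}+\frac1N S^{N/2}$, while the interaction terms together with the genuinely new logarithmic contribution must be shown to produce a strictly negative correction. This is exactly where $N\ge6$ enters, since only then does the critical interaction dominate the remainder and $\int U_\varepsilon^2\log U_\varepsilon^2$ is of the favorable order; the sign $\theta>0$ works in our favour, the bubble being large precisely where $\log U_\varepsilon^2>0$.

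Granting the strict estimate, I pass to a weak limit. Using Ekeland's principle on $\mathcal{M}$ together with the nondegeneracy of the fibering, the minimizing sequence may be taken to be a Palais–Smale sequence for the free functional $\mathcal{L}$. The key structural point is that the logarithmic nonlinearity is weakly continuous: since $|v^2\log v^2|\le C(|v|^{2-\delta}+|v|^{2+\delta})$ with both exponents subcritical, Rellich compactness gives $\int u_n^2\log u_n^2\to\int u^2\log u^2$, so only the critical term can concentrate. A Brézis–Lieb splitting of $u_n^+$ and $u_n^-$ then shows that any loss of compactness detaches at least one bubble of energy $\ge\frac1N S^{N/2}$, while the complementary sign still carries energy $\ge\mathcal{C}$ (using $\mathcal{C}>0$ and positivity of $\mathcal{L}$ on $\mathcal{N}$); the total would exceed $\mathcal{C}+\frac1N S^{N/2}>\mathcal{B}$, a contradiction. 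This simultaneously forces $u^\pm\ne0$ and $u_n\to u$ strongly, so $u\in\mathcal{M}$ with $\mathcal{L}(u)=\mathcal{B}$. That such a minimizer is a genuine critical point of $\mathcal{L}$ follows from the standard natural-constraint argument: if $\mathcal{L}'(u)\ne0$, one deforms $su^++tu^-$ near $(s,t)=(1,1)$ to strictly lower the energy while remaining in $\mathcal{M}$, contradicting minimality.

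Finally I show $u$ has exactly two nodal domains; it has at least two since it changes sign. If it had a third, list the nodal domains $\Omega_1,\dots,\Omega_k$ with $k\ge3$, set $u_i=u\chi_{\Omega_i}$, and choose the labelling so that $u_1\ge0$ and $u_2\le0$. Testing the equation against each $u_i\in H_0^1(\Omega)$ gives $\mathcal{G}(u_i)=\mathcal{L}'(u)u_i=0$, so $u_i\in\mathcal{N}$ and, by the energy identity, $\mathcal{L}(u_i)>0$. Then $w:=u_1+u_2\in\mathcal{M}$ and
\[
\mathcal{B}\le\mathcal{L}(w)=\mathcal{L}(u_1)+\mathcal{L}(u_2)=\mathcal{L}(u)-\sum_{i\ge3}\mathcal{L}(u_i)<\mathcal{L}(u)=\mathcal{B},
\]
a contradiction. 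Hence $u$ has exactly two nodal domains, which completes the proof.
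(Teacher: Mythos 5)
Your overall architecture (split the functional over $u^{\pm}$, establish the strict bound $\mathcal{B}<\mathcal{C}+\frac1N\mathcal{S}^{N/2}$ with a glued bubble, rule out concentration, then the natural-constraint and nodal-domain arguments) matches the paper's, and your final two steps --- the degree/deformation argument showing a minimizer on $\mathcal{M}$ is a free critical point, and the Castro--Cossio--Neuberger count of nodal domains --- are essentially the paper's own. The energy estimate is only sketched, but you correctly locate where $N\ge 6$ and $\theta>0$ enter (the favorable $\varepsilon^2|\log\varepsilon|$ contribution of $\int\psi_\varepsilon^2\log\psi_\varepsilon^2$ against $O(\varepsilon^{(N-2)/2})$ interaction errors), so I will not press that point.

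The genuine gap is the sentence ``Using Ekeland's principle on $\mathcal{M}$ together with the nondegeneracy of the fibering, the minimizing sequence may be taken to be a Palais--Smale sequence for the free functional.'' The set $\mathcal{M}$ is defined through the maps $u\mapsto u^{\pm}$, which are not $C^1$ on $H_0^1(\Omega)$, so $\mathcal{M}$ is not a $C^1$ manifold; Ekeland's principle on the metric space $\mathcal{M}$ only lets you test against competitors of the form $su_n^++tu_n^-$ (a two-dimensional family of directions), which is nowhere near enough to conclude $\mathcal{L}'(u_n)\to 0$. This is exactly the obstruction the paper identifies: it states that the Cerami--Solimini--Struwe device for producing a Palais--Smale sequence inside $\mathcal{M}$ breaks down because of the indefinite sign of $u\log u^2$, and it circumvents the problem by subcritical approximation \`a la Devillanova--Solimini --- one first produces genuine critical points $u_n$ of $\mathcal{L}_{p_n}$ at level $\mathcal{B}_{p_n}$ (where compactness is free), proves $\limsup_{p\to 2^*}\mathcal{B}_p\le\mathcal{B}$, and only then runs the concentration analysis. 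Without the Palais--Smale property your subsequent splitting argument does not close: the weak limit $u$ of a mere minimizing sequence is not known to satisfy $\mathcal{L}'(u)=0$, so $\mathcal{G}(u^{\pm})$ need not vanish, the lost energy of $w_n^{\pm}=u_n^{\pm}-u^{\pm}$ is not quantized at $\frac1N\mathcal{S}^{N/2}$, and the case analysis ``one bubble of energy $\ge\frac1N\mathcal{S}^{N/2}$ plus a piece of energy $\ge\mathcal{C}$'' cannot be justified. You need either a genuinely different mechanism for producing an almost-critical minimizing sequence in $\mathcal{M}$, or the approximation scheme the paper uses.
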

	\begin{remark}
		{\rm Comparing the results of \cite{Cerami-Solimini-Struwe 1986} and Theorem \ref{Theorem 1} above, we  can see  that equation \eqref{System1} has a sign-changing solution  for $0 < \lambda < \lambda_{1}(\Omega)$ and $N \geq 6$ if $\theta=0$, while it has a sign-changing solution for all $\lambda\in \R$ and $N\geq 6$ if $\theta>0$. So, the logarithmic term $\theta u\log u^2$ has much more influence than the term $\lambda u$ on the existence of  solutions.}
	\end{remark}
	To demonstrate the influence of logarithmic terms on the existence of sign-changing solutions, we will begin by reviewing the essential steps outlined in the work of Cerami, Solimini, and Struwe in 1986 \cite{Cerami-Solimini-Struwe 1986}, which studied the Br\'ezis-Nirenberg problem \eqref{System3}. They first obtained the existence of Palais-Smale sequence, which belongs to the corresponding sign-changing Nehari set, and then showed that the weak limit of such sequence is a  nontrivial  sign-changing solution   by establishing a key energy estimate.
	
	However,  when we try to apply their methods to the equation \eqref{System1}, we are faced with some difficulties.  One of the  main challenges arises from  the  logarithmic terms $u\log u^2$, which  makes it complicated to establish the corresponding energy estimate. This requires us to develop new ideas and perform additional calculations, we refer to  Section \ref{Sect2} for details. Furthermore, because of the sign of $u\log u^2$ is indefinite,   the approach  in \cite{Cerami-Solimini-Struwe 1986} for  obtaining a  Palais-Smale sequence belonging to $\mathcal{M}$ cannot be applied
	to our case.  In order to overcome this difficulty, we adopt some ideas in the classical work  \cite{Devillanova=ADE=2002} by using the subcritical approximation method.
	  More precisely, we first establish the existence of sign-changing solutions for the subcritical problem
	\begin{equation}
		-\Delta u=\lambda u+
		|u|^{p-2}u+\theta  u\log u^2, ~~
		u \in H_0^1(\Omega),
	\end{equation}
	with $2<p<2^*$. Then we study the behaviors of these solutions and finally take the limit as the exponent $p$ approaches  $2^*$ to obtain a sign-changing  solution of the critical problem \eqref{System1}.
	
	\medbreak
	In a particular case $\Omega=B_R:=\lbr{ x\in \R^N:|x|\leq R}$, we can construct infinitely many radial sign-changing solutions for equation \eqref{System1}. Then the existence result   in this aspect  can be stated as follows.
	\begin{theorem}\label{Theorem 2}
		Suppose that $\Omega=B_R $, $N\geq 6$ and $\lambda\in \R,\theta>0$. Then for any $k\in\N$, there exists a pair of radial solutions $u_{+}$ and $u_{-}$, $u_{\pm}(x)=u_{\pm}\sbr{|x|}$ of equation \eqref{System1} with the following properties:
		\begin{enumerate}
			\item[(i)] $u_{-}(0)<0<u_{+}(0)$,
			\item[(ii)]    $u_{\pm}$ possess exactly $k$ nodes with $0<r_1<r_2<\ldots<r_k:=R$ and $u_{+}(r_i)=u_{-}(r_i)=0$ for $i=1,2,\ldots,k$.
		\end{enumerate}
	\end{theorem}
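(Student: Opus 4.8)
The plan is to exploit the odd symmetry of the nonlinearity together with a radial subcritical approximation. First observe that the right-hand side $f(u)=\lambda u+|u|^{2^*-2}u+\theta u\log u^2$ is odd in $u$, so $u$ solves \eqref{System1} if and only if $-u$ does. Consequently it suffices to produce, for each $k\in\N$, a single radial solution $u_{+}$ with $u_{+}(0)>0$ possessing exactly $k$ nodal domains separated by $0<r_1<\cdots<r_k=R$; the companion $u_{-}:=-u_{+}$ then automatically satisfies $u_{-}(0)<0$, shares the same nodal radii, and gives the pair in the statement. To build $u_{+}$ I would work in the radial space $H_{0,\mathrm{rad}}^1(B_R)$, on which, by the principle of symmetric criticality, critical points of $\mathcal{L}$ are genuine solutions of \eqref{System1}. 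Since the embedding $H_{0,\mathrm{rad}}^1(B_R)\hookrightarrow L^{2^*}(B_R)$ fails to be compact, I follow the subcritical-approximation scheme of \cite{Devillanova=ADE=2002} already used for Theorem \ref{Theorem 1}: for $p\in(2,2^*)$ I first construct a radial solution $u_p$ with exactly $k$ nodal domains and then let $p\to 2^*$.

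For fixed $p$ the construction is variational and compact. Given node positions $0=\rho_0<\rho_1<\cdots<\rho_{k-1}<\rho_k=R$, I split $B_R$ into the annuli $A_i=\{\rho_i<|x|<\rho_{i+1}\}$, $i=0,\dots,k-1$, and on each $A_i$ minimize the corresponding subcritical energy $\mathcal{L}_p$ over its Nehari manifold, selecting the sign $(-1)^i$, to obtain a least-energy signed solution $w_i$ of the Dirichlet problem on $A_i$; compactness of the radial embedding for $p<2^*$ guarantees these minimizers exist. Gluing $u_{\boldsymbol\rho}=\sum_i w_i$ and setting $\Phi(\boldsymbol\rho)=\sum_i \mathcal{L}_p(w_i)$, I then minimize $\Phi$ over the admissible simplex of radii. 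As any gap $\rho_{i+1}-\rho_i\to 0$ the least energy on the shrinking annulus blows up, so the infimum is attained at an interior configuration; at such a minimizer the first-order (domain-derivative / Pohozaev) conditions in $\boldsymbol\rho$ force the normal derivatives of consecutive pieces to match in magnitude across each interface, whence $u_p:=u_{\boldsymbol\rho}$ is $C^1$ and is a genuine radial solution of the subcritical equation with exactly $k$ nodal domains and alternating signs.

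The remaining and most delicate step is the passage to the limit. Uniform a priori bounds on $\mathcal{L}_p(u_p)$ and $\|u_p\|_{H^1}$ — obtained from upper energy estimates for the $k$-nodal level together with the same logarithmic estimates developed for Theorem \ref{Theorem 1} — let me extract a radial weak limit $u$ solving the critical equation \eqref{System1}. The hard part is to show that $u$ still has exactly $k$ nodal domains: I must rule out both the loss of nodes through concentration (bubbling of a nodal component onto a point) and the merging or escape of nodes toward $0$ or $R$. This is precisely where the hypothesis $N\ge 6$ enters, since the corresponding energy-gap estimate — now carrying the extra lower-order logarithmic contribution, whose sign is indefinite and must be controlled both pointwise near the nodes and in the $\int u^2\log u^2$ term — keeps the energy of each nodal region strictly below the bubbling threshold and keeps the nodal radii bounded away from one another and from the endpoints. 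Establishing this nodal persistence, and in particular quantifying the influence of $\theta u\log u^2$ on the concentration estimates, is where I expect the main technical work to lie. Once it is in place, the limit $u=u_{+}$ has the required sign at the origin and exactly $k$ nodal domains with nodes $0<r_1<\cdots<r_k=R$, and $u_{-}=-u_{+}$ completes the proof.
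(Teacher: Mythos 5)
Your reduction by oddness and your fixed-$p$ construction are reasonable in outline, but the proposal has a genuine gap exactly at the step you yourself flag as ``where the main technical work lies'': you never produce the quantitative energy-gap estimate that prevents a nodal component from being lost in the limit, and without it the argument does not close. The paper's mechanism is an induction on $k$ driven by Lemma \ref{Lemma N nodes energy estimate}, namely $\mathcal{B}_{k+1}<\mathcal{B}_k+\frac{1}{N}\mathcal{S}^{\frac{N}{2}}$, which is proved by inserting the two-nodal solution furnished by Theorem \ref{Theorem 1} into the first nodal ball of a $k$-nodal minimizer (this is where $N\ge 6$ and Proposition \ref{Lemma energy estimate1} enter). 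If an annulus collapses ($r_{i-1}=r_i$ in the limit), the Nehari constraint together with the elementary bound $s^2\log s^2\le C s^{2^*}$ forces the $2^*$-mass of that component to stay above $\mathcal{S}^{\frac{N}{2}}$, hence its energy above $\frac{1}{N}\mathcal{S}^{\frac{N}{2}}>\mathcal{B}_k-\mathcal{B}_{k-1}$; deleting the component then yields an element of $\mathcal{M}_{k-1}$ with energy strictly below $\mathcal{B}_{k-1}$, a contradiction that requires the inductive hypothesis that $\mathcal{B}_{k-1}$ is attained. Your proposal contains neither this estimate nor the induction, and ``keeps the energy of each nodal region strictly below the bubbling threshold'' is asserted rather than derived; in particular it is unclear how you would obtain such a gap uniformly in $p$ as $p\to 2^*$.

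A second, structural point: the paper does not use subcritical approximation for Theorem \ref{Theorem 2} at all. It minimizes $\mathcal{L}$ directly over the nodal Nehari set $\mathcal{M}_k$ at the critical exponent, takes a minimizing sequence with nodal radii $r_j^n\to r_j$, rules out collapse as above, and then assembles the candidate minimizer annulus-by-annulus as the signed ground states $\bar{u}_j$ on the limiting annuli $\bar{\Omega}_j$ (only the innermost ball is genuinely critical for radial functions, and there the positive ground state exists by the cited result of Deng et al.). Finally it verifies that the glued function is a true solution by a local perturbation of the node positions, showing that a jump in $u'$ across $r_j$ would strictly lower the energy by $\frac{r_j^{N-1}\delta}{4}(u_+'-u_-')^2+o(\delta)$. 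Your domain-derivative matching at fixed $p$ plays the role of this last step, but the limit passage $p\to 2^*$ --- the only place where the critical exponent actually bites --- is precisely what your outline leaves unproved.
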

	
	
	\begin{remark}
		{\rm According to \cite{Cerami-Solimini-Struwe 1986}, the infinitely many radial sign-changing solutions of  Br\'ezis-Nirenberg problem \eqref{System3} can be obtained only for $N\geq 7$ and  $ \lambda \in \sbr{0,\lambda_{1}(\Omega)}$. However, the presence of logarithmic terms in equation \eqref{System1}  allows for the existence of infinitely many radial sign-changing solutions for $N\geq 6$ and $\lambda\in \R$.}
	\end{remark}

	%
	%
	%
	
	\medbreak
	Throughout this paper, we denote the norm of $L^p(\Omega)$  by $|\cdot|_p$ for  $1\leq p \leq \infty$.  We use ``$\to$'' and ``$\rightharpoonup$'' to denote the strong convergence and weak convergence in corresponding space respectively. The capital letter $C$ will appear as a constant which may vary from line to line.
	
	This paper is organized as follows. In Section \ref{Sect2}, we will introduce an important energy estimate, which is used to prove Theorem \ref{Theorem 1}, and complete its proof in Section \ref{Sect3}. In Section \ref{Sect4}, we will construct infinitely many radial sign-changing solutions for equation \eqref{System1} and prove Theorem \ref{Theorem 2}.
	
	\section{Energy estimates }\label{Sect2}
	In this section, we will  give an  energy estimate that $\mathcal{B} <\mathcal{C} +\frac{1}{N} \mathcal{S}^{\frac{N}{2}}$, which is crucially important to the proof of Theorem \ref{Theorem 1}. Inspired by Cerami-Solimini-Struwe \cite{Cerami-Solimini-Struwe 1986}, we only need to find some appropriate $\psi_\varepsilon \in H_0^1(\Omega)$ such that  $\sup_{\alpha,\beta\in \R} \mathcal{L}(\alpha u_g +\beta \psi_\varepsilon )<\mathcal{C} +\frac{1}{N} \mathcal{S}^{\frac{N}{2}} $, where $u_g$ is a positive ground state solution of equation \eqref{System1}. However,   the  logarithmic  nonlinearity causes some obstacles, so we can not use the methods of   \cite{Cerami-Solimini-Struwe 1986} directly, and  we require some  new  ideas.
	
	Without loss of generality, we may assume that $B_{2\rho}(0)\subset \Omega$   for some appropriate $\rho >0$.
	For $\varepsilon>0$ and $y \in \R^N$, we consider the Aubin-Talenti bubble (\cite{Aubin=JDG=1976},\cite{Talenti=AMPA=1976})  $U_{\varepsilon,y}\in\mathcal{D}^{1,2}(\R^N) $ defined by
	\begin{equation} \label{limit system_1}
		U_{\varepsilon,y}(x)= \cfrac{\mbr{N(N-2))\varepsilon^2}^{\frac{N-2}{4}}}{\sbr{\varepsilon^2+|x-y|^2}^{\frac{N-2}{2}}}~.
	\end{equation}
	Then $U_{\varepsilon,y}$ solves the equation
	\begin{equation}
		-\Delta u = u^{\frac{N+2}{N-2}} \text{ in } \R^N,
	\end{equation}
	and
	\begin{equation}
		\int_{\R^3}|\nabla U_{\varepsilon,y}|^2  =\int_{\R^3}| U_{\varepsilon,y}|^{2^*} = \mathcal{S}^{\frac{N}{2}}.
	\end{equation}
	Here, $\mathcal{S}$ is the Sobolev best constant of $\mathcal{D}^{1,2}(\R^N)\hookrightarrow L^{2^*}(\R^N)$,
	\begin{equation}\label{sobolev constant}
		\mathcal{S}=\inf_{u \in \mathcal{D}^{1,2}(\R^N) \setminus \lbr{0} } \cfrac{\int_{\R^N } |\nabla u|^2 }{\sbr{\int_{\R^N} |u|^{2^*}}^{\frac{2}{2^*}}}~,
	\end{equation}
	where $\mathcal{D}^{1,2}(\R^N)=\lbr{u\in L^2(\R^N): |\nabla u| \in L^2(\R^N)}$ with norm $\left\|u \right\|_{\mathcal{D}^{1,2}}:=\sbr{\int_{\R^N}|\nabla u|^2 }^{\frac{1}{2}} $.  To simplify the notation, we denote
	\begin{equation} \label{defi of U{varepsilon}}
		U_{\varepsilon}(x):=U_{\varepsilon,0}(x)= \cfrac{\mbr{N(N-2))\varepsilon^2}^{\frac{N-2}{4}}}{\sbr{\varepsilon^2+|x|^2}^{\frac{N-2}{2}}}~.
	\end{equation}

	Let $\xi\in C_0^\infty (\Omega)$ be the radial  function,
	such that  $\xi(x)\equiv1$ for $0\leq |x|\leq \rho$, $0\leq \xi(x)\leq 1$ for $\rho \leq |x|\leq 2\rho $, and $\xi(x)\equiv0$ for $ |x|\geq  2\rho$, and we denote
	\begin{equation}
		\psi_\varepsilon(x):=\xi(x)U_{\varepsilon}(x).
	\end{equation}
	Then we have the following well-known energy estimates.
	
	\begin{lemma} \label{Lemma ES-1} Let $N\geq 5$, then we have, as $\varepsilon\to 0^+$,
		\begin{equation} \label{es-1}
			\int_{\Omega}|\nabla \psi_\varepsilon|^2=\mathcal{S}^{\frac{N}{2}}+O(\varepsilon^{N-2}),
		\end{equation}
		\begin{equation}\label{es-2}
			\int_{\Omega} |\psi_\varepsilon|^{2^*}  = \mathcal{S}^{\frac{N}{2}}+ O(\varepsilon^N),
		\end{equation}
		\begin{equation} \label{es-3}
			\int_{\Omega} |\psi_\varepsilon|^2 =
			C_0\varepsilon^2+O(\varepsilon^{N-2}),
		\end{equation}
		and
		\begin{equation}\label{es-4}
			\int_{\Omega} |\psi_\varepsilon| =O(\varepsilon^{\frac{N-2}{2}}), \quad \int_{\Omega} |\psi_\varepsilon|^{2^*-1} =O(\varepsilon^{\frac{N-2}{2}}),
		\end{equation}
		where $	C_0$ is a positive constant.
		
	\end{lemma}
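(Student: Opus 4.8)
The plan is to establish the four asymptotic estimates of Lemma~\ref{Lemma ES-1} by reducing each integral over $\Omega$ to an integral over $\R^N$ involving the exact bubble $U_\e$, and controlling the error introduced by the cutoff $\xi$ on the annulus $\rho\le|x|\le 2\rho$. The key observation driving every estimate is that on the support of $1-\xi$, i.e. for $|x|\ge\rho$, the bubble $U_\e(x)=[N(N-2)\e^2]^{(N-2)/4}(\e^2+|x|^2)^{-(N-2)/2}$ is uniformly small: it is bounded by $C\e^{(N-2)/2}|x|^{-(N-2)}$, so every tail contribution decays like a fixed power of $\e$. The main computational device is the substitution $x=\e z$, which turns $\int_{\R^N}$ of powers of $U_\e$ into $\e$-independent integrals of powers of $U_1$, times an explicit power of $\e$ that tracks the scaling weight.

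First I would prove \eqref{es-1}. Writing $\nabla\psi_\e=\xi\nabla U_\e+U_\e\nabla\xi$ and expanding the square, the cross term and the $|U_\e\nabla\xi|^2$ term are supported in the annulus and are therefore $O(\e^{N-2})$ by the tail bound above. The leading term $\int_\Omega\xi^2|\nabla U_\e|^2$ equals $\int_{\R^N}|\nabla U_\e|^2=\mathcal{S}^{N/2}$ minus the tail $\int_{|x|\ge\rho}|\nabla U_\e|^2=O(\e^{N-2})$, together with the annular correction from $\xi^2\le 1$, again $O(\e^{N-2})$; integration by parts using $-\Delta U_\e=U_\e^{2^*-1}$ makes this clean. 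Estimate \eqref{es-2} is analogous but cleaner: $\int_\Omega\xi^{2^*}U_\e^{2^*}=\int_{\R^N}U_\e^{2^*}-\int_{|x|\ge\rho}U_\e^{2^*}+(\text{annular})$, and since $U_\e^{2^*}\sim\e^N|x|^{-2N}$ on $|x|\ge\rho$ the tail is $O(\e^N)$, giving $\mathcal{S}^{N/2}+O(\e^N)$.

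For \eqref{es-3} the scaling weight is different because $|U_\e|^2$ is not scale-invariant. After $x=\e z$ one finds $\int_{\R^N}U_\e^2=\e^2\int_{\R^N}U_1^2$ when $N\ge 5$ (so that $U_1^2\sim|z|^{-2(N-2)}$ is integrable at infinity, this being exactly where the hypothesis $N\ge5$ enters), which defines the constant $C_0=\int_{\R^N}U_1^2>0$; the restriction to $\Omega$ and the cutoff each cost only the tail $\int_{|x|\ge\rho}U_\e^2=O(\e^{N-2})$, yielding $C_0\e^2+O(\e^{N-2})$. Finally \eqref{es-4} follows by the same substitution: $\int_\Omega|\psi_\e|\le\int_{\R^N}U_\e=\e^{(N-2)/2}\int_{\R^N}U_1$ and likewise $\int_\Omega U_\e^{2^*-1}=\e^{(N-2)/2}\int_{\R^N}U_1^{2^*-1}$, both finite for $N\ge5$, so each is $O(\e^{(N-2)/2})$; here it suffices to bound $|\psi_\e|\le U_\e$ and extend the domain to all of $\R^N$, no tail cancellation being needed since only an upper bound of the stated order is claimed.

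The bookkeeping is entirely routine; the only point demanding care is verifying that each tail integral over $\{|x|\ge\rho\}$ genuinely decays at the claimed rate and that the dimensional restriction $N\ge5$ guarantees the finiteness of $\int_{\R^N}U_1^2$, $\int_{\R^N}U_1$, and $\int_{\R^N}U_1^{2^*-1}$. I expect the most delicate step to be \eqref{es-1}, where one must confirm that \emph{both} the gradient tail and the cross/cutoff terms are $O(\e^{N-2})$ rather than some slower rate, since a careless estimate of $\int U_\e|\nabla\xi||\nabla U_\e|$ could produce a spurious lower power of $\e$; the resolution is that $\nabla\xi$ is supported away from the origin where $U_\e$ and $\nabla U_\e$ are both $O(\e^{(N-2)/2})$ pointwise, forcing the product to be of order $\e^{N-2}$ after integrating over the fixed-volume annulus.
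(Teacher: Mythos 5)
The paper does not actually prove this lemma; it simply cites \cite{Capozzi=poincare1985} and \cite{william=1996}, and your outline is indeed the standard argument behind those references: split off the cutoff, use the pointwise tail bound $U_\varepsilon(x)\le C\varepsilon^{(N-2)/2}|x|^{-(N-2)}$ on $|x|\ge\rho$, and rescale. Your treatment of \eqref{es-1}, \eqref{es-2}, \eqref{es-3} and of the second half of \eqref{es-4} is correct, including the identification of where $N\ge 5$ enters (integrability of $U_1^2$ at infinity) and the observation that the cross and cutoff terms in \eqref{es-1} live on a fixed annulus where both $U_\varepsilon$ and $\nabla U_\varepsilon$ are pointwise $O(\varepsilon^{(N-2)/2})$.

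There is, however, a genuine error in your justification of the first estimate in \eqref{es-4}. You write $\int_\Omega|\psi_\varepsilon|\le\int_{\R^N}U_\varepsilon=\varepsilon^{(N-2)/2}\int_{\R^N}U_1$ and list $\int_{\R^N}U_1$ among the integrals whose finiteness is guaranteed by $N\ge5$. This fails twice: the scaling identity $U_\varepsilon(x)=\varepsilon^{-(N-2)/2}U_1(x/\varepsilon)$ gives the exponent $\varepsilon^{N-(N-2)/2}=\varepsilon^{(N+2)/2}$ for the $L^1$ norm, not $\varepsilon^{(N-2)/2}$; and, more seriously, $\int_{\R^N}U_1=+\infty$ for every $N$, since $U_1(z)\sim|z|^{-(N-2)}$ and $\int_{|z|\ge1}|z|^{-(N-2)}\,dz$ diverges. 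So ``extend the domain to all of $\R^N$'' is precisely the step that cannot be taken for the $L^1$ norm; the estimate is \emph{not} scale-produced but domain-produced. The correct one-line fix is to use the boundedness of the support of $\psi_\varepsilon$: from $U_\varepsilon(x)\le[N(N-2)]^{(N-2)/4}\varepsilon^{(N-2)/2}|x|^{-(N-2)}$ one gets $\int_\Omega|\psi_\varepsilon|\le C\varepsilon^{(N-2)/2}\int_{B_{2\rho}}|x|^{-(N-2)}\,dx=O(\varepsilon^{(N-2)/2})$, since $|x|^{-(N-2)}$ is locally integrable. (Equivalently, $\int_{B_{2\rho/\varepsilon}}U_1\sim C\varepsilon^{-2}$, which combined with the correct prefactor $\varepsilon^{(N+2)/2}$ recovers $\varepsilon^{(N-2)/2}$.) The second half of \eqref{es-4} is fine as you wrote it, because $U_1^{2^*-1}\sim|z|^{-(N+2)}$ is genuinely integrable on $\R^N$ and the scaling exponent there is indeed $(N-2)/2$.
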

	\begin{proof}
		The proofs  can be found in \cite[Lemma 2.1]{Capozzi=poincare1985} and  \cite[Lemma 1.46]{william=1996}.
	\end{proof}
	
	
	\begin{lemma}\label{Lemma ES-3}
		Let $N\geq 5$, then we have, as $\varepsilon\to 0^+$,
		\begin{equation}\label{es-7}
			\int_{\Omega} \psi_\varepsilon^2 \log \psi_\varepsilon^2 =C_1 \varepsilon^2|\log \varepsilon|+O(\varepsilon^2),
		\end{equation}
		where $C_1$ is a positive constant.
	\end{lemma}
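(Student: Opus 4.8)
The plan is to use the factorization $\psi_\varepsilon^2=\xi^2U_\varepsilon^2$ together with the additivity of the logarithm, writing
\[
\int_\Omega \psi_\varepsilon^2\log\psi_\varepsilon^2=\int_\Omega \xi^2U_\varepsilon^2\log U_\varepsilon^2+\int_\Omega \xi^2U_\varepsilon^2\log\xi^2,
\]
and to show that all of the $\varepsilon^2|\log\varepsilon|$ growth comes from the first term while the second is absorbed into the $O(\varepsilon^2)$ remainder. First I would dispose of the cut-off term: $\log\xi^2$ is supported on the annulus $\rho\le|x|\le 2\rho$, on which $U_\varepsilon(x)^2=O(\varepsilon^{N-2})$ uniformly and $s\mapsto s\log s$ is bounded on $[0,1]$; hence $\int_\Omega \xi^2U_\varepsilon^2\log\xi^2=O(\varepsilon^{N-2})$, which is $O(\varepsilon^2)$ since $N\ge5$.

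For the main term I would expand
\[
\log U_\varepsilon^2=\tfrac{N-2}{2}\log(N(N-2))+(N-2)\log\varepsilon-(N-2)\log(\varepsilon^2+|x|^2),
\]
splitting $\int_\Omega \xi^2U_\varepsilon^2\log U_\varepsilon^2$ into three pieces. The first two are a constant and a $\log\varepsilon$ multiple of $\int_\Omega\xi^2U_\varepsilon^2$, which by \eqref{es-3} equals $C_0\varepsilon^2+O(\varepsilon^{N-2})$; together they contribute $(N-2)C_0\,\varepsilon^2\log\varepsilon+O(\varepsilon^2)$. The third piece, $-(N-2)\int_\Omega\xi^2U_\varepsilon^2\log(\varepsilon^2+|x|^2)$, I would treat by the rescaling $x=\varepsilon z$; with $A_N:=[N(N-2)]^{(N-2)/2}$ and $\log(\varepsilon^2+|x|^2)=2\log\varepsilon+\log(1+|z|^2)$ it becomes
\[
-(N-2)A_N\varepsilon^2\int_{\R^N}\xi^2(\varepsilon z)\,\frac{2\log\varepsilon+\log(1+|z|^2)}{(1+|z|^2)^{N-2}}\,dz.
\]
The weights $(1+|z|^2)^{-(N-2)}$ and $(1+|z|^2)^{-(N-2)}\log(1+|z|^2)$ are integrable over $\R^N$ precisely because $N\ge5$, so dominated convergence produces the finite constants $K_1:=\int_{\R^N}(1+|z|^2)^{-(N-2)}dz$ and $K_2:=\int_{\R^N}(1+|z|^2)^{-(N-2)}\log(1+|z|^2)\,dz$, giving $-(N-2)\int_\Omega\xi^2U_\varepsilon^2\log(\varepsilon^2+|x|^2)=-2(N-2)A_NK_1\,\varepsilon^2\log\varepsilon+O(\varepsilon^2)$.

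The decisive point is a coefficient identity: applying the very same rescaling to \eqref{es-3} shows $\int_\Omega\xi^2U_\varepsilon^2=A_N\varepsilon^2\int_{\R^N}\xi^2(\varepsilon z)(1+|z|^2)^{-(N-2)}dz$, so that $C_0=A_NK_1$. With this the two $\varepsilon^2\log\varepsilon$ contributions collapse,
\[
(N-2)C_0\,\varepsilon^2\log\varepsilon-2(N-2)A_NK_1\,\varepsilon^2\log\varepsilon=-(N-2)C_0\,\varepsilon^2\log\varepsilon,
\]
and since $\log\varepsilon=-|\log\varepsilon|$ for small $\varepsilon$, assembling the pieces yields $\int_\Omega\psi_\varepsilon^2\log\psi_\varepsilon^2=(N-2)C_0\,\varepsilon^2|\log\varepsilon|+O(\varepsilon^2)$, i.e. \eqref{es-7} with $C_1=(N-2)C_0>0$.

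The hard part will be the bookkeeping at order $\varepsilon^2\log\varepsilon$. Because an isolated $\log\varepsilon$ multiplies the rescaled integrals, a crude $o(1)$ convergence is not enough: I must quantify $K_1-\int_{\R^N}\xi^2(\varepsilon z)(1+|z|^2)^{-(N-2)}dz=O(\varepsilon^{N-4})$ as the tail of a convergent integral, so that after multiplication by $\log\varepsilon$ the error is $O(\varepsilon^{N-4}\log\varepsilon)=o(1)$ for $N\ge5$ and feeds only into the $O(\varepsilon^2)$ remainder. The second essential ingredient is recognizing the identity $C_0=A_NK_1$, without which the $\varepsilon^2\log\varepsilon$ terms would not reduce to a single clean multiple and the positivity $C_1>0$ would be lost.
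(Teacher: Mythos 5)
Your proof is correct: the decomposition $\log\psi_\varepsilon^2=\log\xi^2+\log U_\varepsilon^2$, the rescaling $x=\varepsilon z$, the identification $C_0=A_NK_1$ that collapses the two $\varepsilon^2\log\varepsilon$ contributions into $(N-2)C_0\varepsilon^2|\log\varepsilon|$, and the quantified tail estimate $O(\varepsilon^{N-4})$ needed because a factor $\log\varepsilon$ multiplies the rescaled integral are all sound, and the restriction $N\ge 5$ enters exactly where it should (convergence of $K_1$, $K_2$ and smallness of $\varepsilon^{N-2}\log\varepsilon$). The paper itself gives no argument here, deferring entirely to \cite[Lemma 3.2]{Deng-He-Pan=Arxiv=2022}; your computation is the standard one carried out in that reference, so you have simply supplied in full the proof the paper omits.
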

	\begin{proof}
		The proofs  can be found in  \cite[Lemma 3.2]{Deng-He-Pan=Arxiv=2022}, so we omit it.
	\end{proof}
	%
	%
	%
	
	Assume that $N\geq 4$, $\lambda\in \R$ and $\theta>0$, by \cite{Deng-He-Pan=Arxiv=2022}    the Br\'ezis-Nirenberg problem with logarithmic perturbation
	\begin{equation}
		-\Delta u=\lambda u+
		|u|^{2^*-2}u+\theta  u\log u^2, \ \
		u \in H_0^1(\Omega),
	\end{equation}
	has a positive ground state solution $u_g \in C^2(\Omega)$ with $\mathcal{L}(u_g)=\mathcal{C} >0$ and
	\begin{equation}
		\int_{\Omega}|\nabla u_g|^2=\lambda \int_{\Omega} |u_g|^2+\int_{\Omega} |u_g|^{2^*}+\theta \int_{\Omega}u_g^2\log u_g^2.
	\end{equation}
	Moreover, there exists a constant $C>0$ such that $|u_g|_{L^{\infty}(\Omega)} \leq C$. Then we have the following result.
	
	%
	
	%
	%
	

	\begin{lemma} \label{Lemma ES-4} Assume that  $N\geq 4$ and $\alpha>0,\beta<0$ are bounded, then we have, as $\varepsilon\to 0^+$,
		\begin{equation}\label{es-5}
			\int_{\Omega} |\alpha u_g +\beta \psi_\varepsilon|^{2^*} =	\int_{\Omega} |\alpha u_g |^{2^*}+	\int_{\Omega} | \beta \psi_\varepsilon|^{2^*}+O(\varepsilon^{\frac{N-2}{2}}),
		\end{equation}
		and
		\begin{equation}\label{es-6}
			\begin{aligned}
				&\int_{\Omega}\sbr{\alpha u_g +\beta \psi_\varepsilon}^2\log \sbr{\alpha u_g +\beta \psi_\varepsilon}^2\geq \int_{\Omega}\sbr{\alpha u_g}^2\log \sbr{\alpha u_g }^2+\int_{\Omega}\sbr{\beta \psi_\varepsilon}^2\log \sbr{\beta \psi_\varepsilon}^2+O(\varepsilon^{\frac{N-2}{2}}).
			\end{aligned}
		\end{equation}
	\end{lemma}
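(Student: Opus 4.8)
The plan is to handle the two estimates separately, since \eqref{es-5} is the routine one. I would set $a := \alpha u_g \ge 0$ and $b := \beta\psi_\varepsilon \le 0$ and invoke the elementary inequality $\bigl||a+b|^{2^*}-|a|^{2^*}-|b|^{2^*}\bigr| \le C\bigl(|a|^{2^*-1}|b|+|a|\,|b|^{2^*-1}\bigr)$, which holds because $2^*>1$. Since $\alpha,\beta$ are bounded and $|u_g|_{L^\infty(\Omega)}\le C$, the two cross terms are dominated pointwise by constant multiples of $\psi_\varepsilon$ and $\psi_\varepsilon^{2^*-1}$, so that the difference is controlled by $C\int_\Omega|\psi_\varepsilon|+C\int_\Omega|\psi_\varepsilon|^{2^*-1}$, each of which is $O(\varepsilon^{\frac{N-2}{2}})$ by \eqref{es-4}. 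This immediately yields \eqref{es-5}.

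The real content is the one-sided bound \eqref{es-6}, where the sign-changing logarithm forces a genuine \emph{lower} bound rather than a two-sided expansion. My plan is to reduce everything to the convex function $g(s):=s\log s$ on $[0,\infty)$, which satisfies $g(0)=0$, $g'(s)=\log s+1$ and $g''(s)=1/s>0$. Writing $a=\alpha u_g\ge0$, $c=|\beta|\psi_\varepsilon\ge0$ and using that $t\mapsto t^2\log t^2=g(t^2)$ is even, the task is to compare $g\bigl((a-c)^2\bigr)$ with $g(a^2)+g(c^2)$. Since $g$ is convex and vanishes at the origin it is superadditive, so $g(a^2)+g(c^2)\le g(a^2+c^2)$; and since $(a-c)^2=a^2+c^2-2ac\le a^2+c^2$, the mean value theorem gives some $\zeta\in[(a-c)^2,\,a^2+c^2]$ with $g(a^2+c^2)-g\bigl((a-c)^2\bigr)=2ac\,(\log\zeta+1)$. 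Combining the two facts produces the pointwise lower bound $g\bigl((a-c)^2\bigr)\ge g(a^2)+g(c^2)-2ac(\log\zeta+1)$; that is, the integrand on the left of \eqref{es-6} dominates the sum of the two pure terms minus the error $2ac(\log\zeta+1)$.

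It would then remain to show $\int_\Omega 2ac(\log\zeta+1)\,dx\le C\varepsilon^{\frac{N-2}{2}}$. Using $ac\ge0$ and $\zeta\le a^2+c^2$ I would bound $\int_\Omega 2ac(\log\zeta+1)\le\int_\Omega 2ac\bigl(\log(a^2+c^2)+1\bigr)^+$, and then, with $a\le C$ and $c=|\beta|\psi_\varepsilon$, control this by $C\int_\Omega\psi_\varepsilon\bigl(1+\log^+\psi_\varepsilon\bigr)$, where $\log^+$ denotes the positive part. The term $\int_\Omega\psi_\varepsilon$ is $O(\varepsilon^{\frac{N-2}{2}})$ by \eqref{es-4}. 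The hard part is the log-weighted term $\int_\Omega\psi_\varepsilon\log^+\psi_\varepsilon$: the crude estimate $\log^+\psi_\varepsilon\le C|\log\varepsilon|$ combined with $\int_\Omega\psi_\varepsilon=O(\varepsilon^{\frac{N-2}{2}})$ would only give $C|\log\varepsilon|\varepsilon^{\frac{N-2}{2}}$, which carries a spurious logarithmic factor and is too weak. The resolution I have in mind is that $\log^+\psi_\varepsilon$ is supported in the concentration region $\{\psi_\varepsilon>1\}\subset B_{C\sqrt\varepsilon}(0)$; rescaling $x=\varepsilon y$ in the definition of $U_\varepsilon$ shows $\int_{B_{C\sqrt\varepsilon}}\psi_\varepsilon=O(\varepsilon^{N/2})$, whence $\int_\Omega\psi_\varepsilon\log^+\psi_\varepsilon\le C|\log\varepsilon|\,\varepsilon^{N/2}=o(\varepsilon^{\frac{N-2}{2}})$ because $N/2>(N-2)/2$. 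Assembling these bounds gives the desired lower bound and proves \eqref{es-6}. I expect this last concentration estimate—pinning down exactly why the logarithmic factor does not survive—to be the main obstacle.
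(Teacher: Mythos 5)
Your argument for \eqref{es-5} is the standard elementary-inequality computation, which is exactly what the paper does by citing Cerami--Fortunato--Struwe. For \eqref{es-6}, however, you take a genuinely different and, as far as I can check, correct route. The paper writes the difference of the three integrals as $I_1+I_2$ via the identity $A^2\log A^2-B^2\log B^2=\int_0^1\frac{\partial}{\partial\sigma}[\cdots]\,d\sigma$ along the segment $\sigma\mapsto \alpha u_g+\sigma\beta\psi_\varepsilon$, discards the favorably signed contribution on the set where $|\alpha u_g+\sigma\beta\psi_\varepsilon|\le 1$ (this is where the hypothesis $\alpha>0$, $\beta<0$ enters), and controls the rest by the pointwise bound $\log s^2\le Cs^{2^*-2}$ for $s>1$ together with \eqref{es-4}. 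You instead exploit convexity of $g(s)=s\log s$: superadditivity gives $g(a^2)+g(c^2)\le g(a^2+c^2)$, and the mean value theorem converts $g(a^2+c^2)-g((a-c)^2)$ into $2ac(\log\zeta+1)$, so the whole error is absorbed into $\int_\Omega 2ac(\log\zeta+1)^+$. This is a clean pointwise mechanism that makes the role of the sign condition ($ac\ge 0$) transparent, at the cost of having to estimate the log-weighted integral $\int_\Omega\psi_\varepsilon\log^+\psi_\varepsilon$. Your concentration argument for that integral (support of $\log^+\psi_\varepsilon$ inside $B_{C\sqrt{\varepsilon}}$, rescaling to get $O(\varepsilon^{N/2}|\log\varepsilon|)$) is correct, but it is more work than necessary: the same pointwise bound the paper uses, $\log^+ s\le C s^{2^*-2}$ for $s\ge 1$, gives $\psi_\varepsilon\log^+\psi_\varepsilon\le C\psi_\varepsilon^{2^*-1}$, and then \eqref{es-4} yields $O(\varepsilon^{\frac{N-2}{2}})$ immediately; so the step you flagged as the main obstacle is actually the easiest one. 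In short: both proofs are valid; the paper's is shorter because the power-law majorant for the logarithm does all the work, while yours isolates the convexity structure more explicitly and would generalize to other superadditive nonlinearities.
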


	\begin{proof}
		The proof of \eqref{es-5} follows directly from the strategies in \cite[Formula 2.11]{Cerami=Ponicare1984},   we omit the  details here. Now we prove the second estimate \eqref{es-6}.
		Notice that
		\begin{equation}
			\begin{aligned}
				&\int_{\Omega}\sbr{\alpha u_g +\beta \psi_\varepsilon}^2\log \sbr{\alpha u_g +\beta \psi_\varepsilon}^2-\int_{\Omega}\sbr{\alpha u_g}^2\log \sbr{\alpha u_g }^2-\int_{\Omega}\sbr{\beta \psi_\varepsilon}^2\log \sbr{\beta \psi_\varepsilon}^2 \\
				& =\int_{\Omega}   \alpha u_g  \mbr{\sbr{\alpha u_g +\beta \psi_\varepsilon}\log \sbr{\alpha u_g +\beta \psi_\varepsilon}^2-\sbr{\alpha u_g }\log \sbr{\alpha u_g }^2}
				\\&	+\int_{\Omega}\beta \psi_\varepsilon \mbr{\sbr{\alpha u_g +\beta \psi_\varepsilon}\log \sbr{\alpha u_g +\beta \psi_\varepsilon}^2-\sbr{\beta \psi_\varepsilon}\log \sbr{\beta \psi_\varepsilon}^2} \\
				&=: I_1+I_2 .
			\end{aligned}
		\end{equation}
		By a direct calculation,  we have
		\begin{equation}
			\begin{aligned}
				I_1&= \int_{\Omega}   \alpha u_g  \int_{0}^{1}\frac{\partial }{\partial \sigma} \mbr{\sbr{\alpha u_g + \sigma\beta \psi_\varepsilon}\log \sbr{\alpha u_g + \sigma\beta \psi_\varepsilon}^2}d \sigma d x\\
				&= \int_{\Omega}\int_{0}^{1}  \alpha \beta u_g \psi_\varepsilon\log \sbr{\alpha u_g + \sigma\beta \psi_\varepsilon}^2 d \sigma d x
				+ \int_{\Omega}\int_{0}^{1}  2\alpha \beta u_g\psi_\varepsilon        d \sigma d x \\
				&=:I_{11}+I_{12}.
			\end{aligned}
		\end{equation}
		By Lemma \ref{es-1},we have
		\begin{equation}
			\begin{aligned}
				I_{12}\leq C|u_g|_{L^{\infty}(\Omega)}|\psi_\varepsilon|_1^1=O(\varepsilon^{\frac{N-2}{2}}).
			\end{aligned}
		\end{equation}
		Since $\alpha>0 $ and $\beta<0$, we deduce that
		\begin{equation}
			\begin{aligned}
				I_{11}&=\int_{0}^{1} \int_{\Omega_{1,\sigma}}  \alpha \beta u_g \psi_\varepsilon\log \sbr{\alpha u_g + \sigma\beta \psi_\varepsilon}^2  d x d \sigma +\int_{0}^{1} \int_{\Omega_{2 ,\sigma}}  \alpha \beta u_g \psi_\varepsilon\log \sbr{\alpha u_g + \sigma\beta \psi_\varepsilon}^2 d x d \sigma \\
				&\geq \int_{0}^{1} \int_{\Omega_{1,\sigma}}  \alpha \beta u_g \psi_\varepsilon\log \sbr{\alpha u_g + \sigma\beta \psi_\varepsilon}^2 d x d \sigma ,
			\end{aligned}
		\end{equation}
		where
		\begin{equation}
			\Omega_{1,\sigma}:=\lbr{x\in \Omega: | \alpha u_g(x) + \sigma\beta \psi_\varepsilon(x)| >1},
		\end{equation}
		\begin{equation}
			\Omega_{2,\sigma}:=\lbr{x\in \Omega: | \alpha u_g(x) + \sigma\beta \psi_\varepsilon(x)| \leq 1}.
		\end{equation}
		Note that  $\log s^2 \leq C s^{2^*-2}$ for  $s>1$ and some $C>0$, then by using Lemma  \ref{es-1} again, we have
		\begin{equation}
			\begin{aligned}
				\left| \int_{0}^{1} \int_{\Omega_{1,\sigma}}  \alpha \beta u_g \psi_\varepsilon\log \sbr{\alpha u_g + \sigma\beta \psi_\varepsilon}^2 d x d \sigma \right| &\leq C \int_{0}^{1} \int_{\Omega_{1,\sigma}} | u_g| | \psi_\varepsilon| |\alpha u_g + \sigma\beta \psi_\varepsilon|^{2^*-2}d x d \sigma \\
				&\leq C\int_{0}^{1} \int_{\Omega_{1,\sigma}} \sbr{|u_g|^{2^*-1}|\psi_\varepsilon|+\sigma^{2^*-2}|u_g|| \psi_\varepsilon|^{2^*-1}}d x d \sigma\\
				&\leq C\sbr{|u_g|_{L^{\infty}(\Omega)}|\psi_\varepsilon|_{2^*-1}^{2^*-1}+|u_g|_{L^{\infty}(\Omega)}^{2^*-1} |\psi_\varepsilon|_1 }= O(\varepsilon^{\frac{N-2}{2}}).
			\end{aligned}
		\end{equation}
		Therefore, $	I_1\geq O(\varepsilon^{\frac{N-2}{2}})$. Similarly, we can prove that $I_2\geq O(\varepsilon^{\frac{N-2}{2}})$. Then \eqref{es-6} holds. This completes the proof.
		
		
	\end{proof}

	\begin{proposition}\label{Lemma energy estimate1}
		Suppose that $N\geq 6$,  $\lambda\in \R$ and $\theta>0$, then we have
		\begin{equation}
			\mathcal{B} <\mathcal{C} +\frac{1}{N} \mathcal{S}^{\frac{N}{2}}.
		\end{equation}
	\end{proposition}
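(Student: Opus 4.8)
The plan is to establish the strict inequality through the reduction already flagged at the start of this section: it suffices to produce a concentration parameter $\varepsilon>0$ for which
\[
\sup_{\alpha,\beta\in\R}\mathcal{L}(\alpha u_g+\beta\psi_\varepsilon)<\mathcal{C}+\frac{1}{N}\mathcal{S}^{\frac{N}{2}},
\]
the point being that the left-hand side dominates $\mathcal{B}$ via the variational structure of $\mathcal{M}$ (for each sign-changing member of this family one projects its positive and negative parts onto $\mathcal{N}$, landing in $\mathcal{M}$). Here $u_g>0$ is the positive ground state with $\mathcal{L}(u_g)=\mathcal{C}$ and $\psi_\varepsilon=\xi U_\varepsilon\ge 0$ is the truncated bubble; for $\alpha>0$, $\beta<0$ the function $\alpha u_g+\beta\psi_\varepsilon$ is genuinely sign-changing, being negative near the concentration point (where $\beta\psi_\varepsilon$ blows up) and positive away from it. Since $\mathcal{L}$ is even and $\mathcal{L}(\alpha u_g+\beta\psi_\varepsilon)\to-\infty$ as $|\alpha|+|\beta|\to\infty$, I would first restrict attention to a fixed compact set of parameters in the quadrant $\alpha>0$, $\beta<0$, where the supremum is attained and where the estimates of Lemma~\ref{Lemma ES-4} apply uniformly; in the complementary quadrant $\beta>0$ the superadditivity of $(\cdot)^{2^*}$ makes $\mathcal{L}$ smaller, so nothing is lost.

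The core of the argument is the splitting
\[
\mathcal{L}(\alpha u_g+\beta\psi_\varepsilon)\le \mathcal{L}(\alpha u_g)+\mathcal{L}(\beta\psi_\varepsilon)+O(\varepsilon^{\frac{N-2}{2}}).
\]
To obtain it I would expand every quadratic term, use the weak equation for $u_g$ to rewrite the gradient cross term $\alpha\beta\int\nabla u_g\cdot\nabla\psi_\varepsilon$ as $\alpha\beta\int(\lambda u_g+u_g^{2^*-1}+\theta u_g\log u_g^2)\psi_\varepsilon$, and bound each such contribution, together with the $L^2$ cross term $\int u_g\psi_\varepsilon$, by $C|\psi_\varepsilon|_1=O(\varepsilon^{\frac{N-2}{2}})$ using the boundedness of $u_g$ and \eqref{es-4}. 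The critical cross term is absorbed by \eqref{es-5}, and the decisive point is that the mixed logarithmic contribution is controlled from below by \eqref{es-6}; since this term enters $\mathcal{L}$ with a minus sign, the lower bound yields exactly the desired upper bound. For the first piece I would check, via the fibering map $\alpha\mapsto\mathcal{L}(\alpha u_g)$, that its derivative equals $\alpha\big[b(1-\alpha^{2^*-2})-2\theta d\log\alpha\big]$ with $b=\int u_g^{2^*}>0$ and $d=\int u_g^2>0$, which is positive for $\alpha<1$ and negative for $\alpha>1$; hence $\sup_{\alpha>0}\mathcal{L}(\alpha u_g)=\mathcal{L}(u_g)=\mathcal{C}$.

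It then remains to estimate the single-bubble energy. Writing $\beta=-t$ with $t>0$ and inserting Lemmas~\ref{Lemma ES-1} and \ref{Lemma ES-3}, I expect
\[
\mathcal{L}(-t\psi_\varepsilon)=\Big(\frac{t^2}{2}-\frac{t^{2^*}}{2^*}\Big)\mathcal{S}^{\frac{N}{2}}-\frac{\theta}{2}C_1\,t^2\varepsilon^2|\log\varepsilon|+O(\varepsilon^2),
\]
uniformly for $t$ in compact sets. The leading profile $\frac{t^2}{2}-\frac{t^{2^*}}{2^*}$ attains its maximum $\frac{1}{N}$ at $t=1$, contributing exactly $\frac{1}{N}\mathcal{S}^{\frac{N}{2}}$; because this maximum is nondegenerate, the perturbed supremum is attained for $t$ bounded away from $0$, where the logarithmic term $-\frac{\theta}{2}C_1 t^2\varepsilon^2|\log\varepsilon|$ is strictly negative and, for small $\varepsilon$, dominates the $O(\varepsilon^2)$ remainder, giving $\sup_{\beta<0}\mathcal{L}(\beta\psi_\varepsilon)\le \frac{1}{N}\mathcal{S}^{\frac{N}{2}}-c\,\varepsilon^2|\log\varepsilon|$ for some $c>0$. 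Combining the three displays yields
\[
\sup_{\alpha>0,\beta<0}\mathcal{L}(\alpha u_g+\beta\psi_\varepsilon)\le \mathcal{C}+\frac{1}{N}\mathcal{S}^{\frac{N}{2}}-c\,\varepsilon^2|\log\varepsilon|+O(\varepsilon^{\frac{N-2}{2}}).
\]

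I expect the main obstacle to be the bookkeeping of orders: the whole scheme works only because the negative logarithmic gain is of order $\varepsilon^2|\log\varepsilon|$, and this must strictly beat every error generated along the way. The errors produced are $O(\varepsilon^2)$ (from the linear and sub-logarithmic remainders) and $O(\varepsilon^{\frac{N-2}{2}})$ (from the critical and cross terms), and one has $\varepsilon^{\frac{N-2}{2}}\le\varepsilon^2=o(\varepsilon^2|\log\varepsilon|)$ precisely when $N\ge 6$; this is exactly where the dimension restriction enters, and it is also what frees the result from any smallness or positivity assumption on $\lambda$, since the $\lambda$-contribution is merely of order $\varepsilon^2$. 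Hence for $\varepsilon$ small enough the right-hand side above is strictly below $\mathcal{C}+\frac{1}{N}\mathcal{S}^{\frac{N}{2}}$, and the reduction then gives $\mathcal{B}<\mathcal{C}+\frac{1}{N}\mathcal{S}^{\frac{N}{2}}$, proving the proposition.
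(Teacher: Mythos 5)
Your proposal is correct and follows essentially the same route as the paper: the same reduction to bounding $\sup_{\alpha>0,\,\beta<0}\mathcal{L}(\alpha u_g+\beta\psi_\varepsilon)$, the same restriction to a bounded parameter set, the same splitting via Lemma~\ref{Lemma ES-4} together with the cross-term bound $O(\varepsilon^{\frac{N-2}{2}})$, the same fibering analysis giving $\sup_{\alpha>0}\mathcal{L}(\alpha u_g)=\mathcal{C}$, and the same use of the $-C\theta\varepsilon^{2}|\log\varepsilon|$ gain from Lemma~\ref{Lemma ES-3} against the $O(\varepsilon^{2})+O(\varepsilon^{\frac{N-2}{2}})$ errors, which is exactly where $N\ge 6$ enters. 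The one step you state more loosely than the paper is why $\mathcal{B}$ is dominated by this supremum: the paper produces a single pair $(\tilde\alpha,\tilde\beta)$ with $\tilde\alpha u_g+\tilde\beta\psi_\varepsilon\in\mathcal{M}$ via Miranda's theorem, whereas projecting the positive and negative parts separately onto $\mathcal{N}$, as your parenthetical suggests, changes the function and yields an energy at least, not at most, that of the original member of the family.
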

	
	\begin{proof}
		Obviously,  we have
		\begin{equation} \label{s1}
			\begin{aligned}
				\mathcal{L}(\alpha u_g +\beta \psi_\varepsilon )&=\frac{\alpha^2}{2}\int_{\Omega}\sbr{|\nabla u_g|^2-\lambda|u_g|^2}+\frac{\alpha^2}{2}\theta\int_{\Omega}|u_g|^2+\frac{\beta^2}{2}\int_{\Omega}\sbr{|\nabla \psi_\varepsilon|^2-\lambda|\psi_\varepsilon|^2}+\frac{\beta^2}{2}\theta\int_{\Omega}|\psi_\varepsilon|^2\\
				&+\alpha\beta\int_{\Omega} \sbr{\nabla u_g\nabla\psi_\varepsilon-\lambda u_g\psi_\varepsilon}+\alpha\beta  \int_{\Omega}\theta u_g\psi_\varepsilon-\frac{1}{2^*}\int_{\Omega}|\alpha u_g +\beta \psi_\varepsilon|^{2^*}\\
				&-\frac{\theta}{2}\int_{\Omega}\sbr{\alpha u_g +\beta \psi_\varepsilon}^2\log \sbr{\alpha u_g +\beta \psi_\varepsilon}^2.
			\end{aligned}
		\end{equation}
		Then it is sufficient to show that
		\begin{equation}
			\sup_{\alpha,-\beta\in \R^+} \mathcal{L}(\alpha u_g +\beta \psi_\varepsilon )<\mathcal{C} +\frac{1}{N} \mathcal{S}^{\frac{N}{2}}.
		\end{equation}
		As a matter of fact, by the Miranda's theorem (see \cite{Miranda}), there exist $\tilde{\alpha}>0,\tilde{\beta}<0$ such that $\tilde{\alpha}u_g+\tilde{\beta}\psi_\varepsilon \in \mathcal{M} $, then we have $$\mathcal{B} \leq \mathcal{L}(\tilde{\alpha}u_g+\tilde{\beta}\psi_\varepsilon )\leq\sup_{\alpha,-\beta\in \R^+} \mathcal{L}(\alpha u_g +\beta \psi_\varepsilon ) <\mathcal{C} +\frac{1}{N} \mathcal{S}^{\frac{N}{2}}.$$
		
		Now we claim that there exists $r_0>0$, such that for all $\alpha^2+\beta^2=r ^2\geq r_0^2$, there holds
		\begin{equation}
			\mathcal{L}(\alpha u_g +\beta \psi_\varepsilon ) \leq 0.
		\end{equation}
		For that purpose, we set $\bar{\alpha}=\alpha/r$, $\bar{\beta}=\beta/r$, and $\phi =\bar{\alpha}u_g +\bar{\beta} \psi_\varepsilon $, then $\bar{\alpha}^2+\bar{\beta}^2=1$ and $\int_{\Omega}|\nabla \phi|^2, \int_{\Omega}|\phi|^2$,  $\int_{\Omega}|\phi|^4$, and $\int_{\Omega}\phi^2 \log \phi^2$ are all bounded from below and above. Therefore, the claim follows directly from
		\begin{equation}
			\begin{aligned}
				\mathcal{L}(r\phi)&= \frac{r^2}{2}\int_{\Omega} \mbr{|\nabla \phi|^2+\sbr{\theta-\lambda}|\phi|^2-\theta \phi^2\log \phi^2}-\frac{r^{2^*}}{2^*} \int_{\Omega}|\phi|^{2^*}-\frac{r^2\log r^2}{2}\theta\int_{\Omega}\phi^2 \\
				&\to -\infty, \quad \text{ as } r\to +\infty.
			\end{aligned}
		\end{equation}
		Based on this claim,  we can assume that $\alpha>0$ and $\beta<0$ are bounded. With this fact in mind, we can deduce from  Lemma \ref{es-1} and $u_g\in C^2(\Omega)$ that
		\begin{equation}
			\alpha\beta\int_{\Omega} \sbr{\nabla u_g\nabla\psi_\varepsilon-\lambda u_g\psi_\varepsilon}+\alpha\beta  \int_{\Omega}\theta u_g\psi_\varepsilon  = O(\varepsilon^{\frac{N-2}{2}}).
		\end{equation}
		Combining this with Lemma \ref{Lemma ES-4}, we  can see from \eqref{s1} that
		\begin{equation}
			\begin{aligned}
				\mathcal{L}(\alpha u_g +\beta \psi_\varepsilon )&\leq \frac{\alpha^2}{2}\int_{\Omega}\sbr{|\nabla u_g|^2-\lambda|u_g|^2}+\frac{\alpha^2}{2}\theta\int_{\Omega}|u_g|^2-\frac{\alpha^{2^*}}{2^*}\int_{\Omega}| u_g|^{2^*}-\frac{\theta}{2}\int_{\Omega}\sbr{\alpha u_g}^2\log \sbr{\alpha u_g }^2\\
				&+\frac{|\beta|^2}{2}\int_{\Omega}\sbr{|\nabla \psi_\varepsilon|^2-\lambda|\psi_\varepsilon|^2}+\frac{|\beta|^2}{2}\theta\int_{\Omega}|\psi_\varepsilon|^2-\frac{|\beta|^{2^*}}{2^*}\int_{\Omega}|   \psi_\varepsilon|^{2^*}-\frac{\theta}{2}\int_{\Omega}\sbr{\beta \psi_\varepsilon}^2\log \sbr{\beta \psi_\varepsilon}^2\\&+O(\varepsilon^{\frac{N-2}{2}})=:f_1(\alpha)+f_2(\beta)+O(\varepsilon^{\frac{N-2}{2}}).
			\end{aligned}
		\end{equation}
		Recalling that $u_{g}$ is the positive least energy solution of $-\Delta u=\lambda u+ |u|^{2^*-2}u+ \theta u\log u^2$. Then, we have
		\begin{equation}\label{f3}
			\int_{\Omega}|\nabla u_{g}|^2=\lambda \int_{\Omega}|u_{g}|^2+ \int_{\Omega}|u_{g}|^{2^*}+\theta  \int_{\Omega}u_{g}^2\log u_{g}^2,
		\end{equation}
		and
		\begin{equation}\label{f31}
			\begin{aligned}
				\mathcal{C}_{g}&=\frac{1}{2}\int_{\Omega}|\nabla u_{g}|^2-\frac{\lambda }{2}\int_{\Omega}|u_{g}|^2-\frac{ 1}{2^*}\int_{\Omega}|u_{g}|^{2^*} -\frac{\theta }{2}\int_{\Omega}u_{g}^2(\log u_{g}^2-1).
			\end{aligned}
		\end{equation}
		By a direct calculation, we can see from \eqref{f3} that
		\begin{equation}\label{F8}
			\begin{aligned}
				f^\prime(\alpha)&=\alpha\int_{\Omega}|\nabla u_{g}|^2-\alpha\lambda\int_{\Omega}|u_{g}|^2-\alpha^{2^*-1}  \int_{\Omega}|u_{g}|^{2^*} -\theta\int_{\Omega}\alpha u_{g}\log (\alpha u_{g})^2\\
				&=(\alpha-\alpha^{2^*-1})\int_{\Omega}|u_{g}|^{2^*} -( \alpha\log\alpha^2)\theta \int_{\Omega}|u_{g}|^2.
			\end{aligned}
		\end{equation}
		Since $\theta>0$ and $2^*>2$, one can easily check that $f^\prime(\alpha)> 0$ for $0<\alpha< 1$ and $f^\prime(\alpha)< 0$ for $\alpha> 1$. So,  by \eqref{f31},
		\begin{equation}\label{s3}
			f_1(\alpha)\leq f_1(1)=\mathcal{C}_{g}.
		\end{equation}
		On the other hand, since $|\beta|$ is bounded, it follows from Lemma \ref{Lemma ES-1} and \ref{Lemma ES-3} that
		\begin{equation}
			\begin{aligned}
				f_2(\beta)&=\sbr{\frac{|\beta|^2}{2}\int_{\Omega}|\nabla \psi_\varepsilon|^2-\frac{|\beta|^{2^*}}{2^*}\int_{\Omega}|   \psi_\varepsilon|^{2^*}}+\frac{\theta-\lambda}{2}|\beta|^2\int_{\Omega}|\psi_\varepsilon|^2-\frac{\theta}{2}|\beta|^2\int_{\Omega}\psi_\varepsilon\log  \psi_\varepsilon^2
				-\frac{\theta}{2}|\beta|^2\log |\beta|^2 \int_{\Omega}|\psi_\varepsilon|^2\\
				&\leq \sbr{\frac{1}{2}|\beta|^2-\frac{1}{2^*}|\beta|^{2^*}}\mathcal{S}^{\frac{N}{2}}+O(\varepsilon^{N-2}) -C \theta \varepsilon^2|\log \varepsilon|+O(\varepsilon^2)\\
				&\leq \frac{1}{N}\mathcal{S}^{\frac{N}{2}} -C \theta \varepsilon^2|\log \varepsilon|+O(\varepsilon^2).
			\end{aligned}
		\end{equation}
		Since $N\geq 6$ and $\theta>0$, we have
		\begin{equation}
			\begin{aligned}
				\mathcal{L}(\alpha u_g +\beta \psi_\varepsilon ) &\leq \mathcal{C}_{g}+ \frac{1}{N}\mathcal{S}^{\frac{N}{2}} -C \theta \varepsilon^2|\log \varepsilon|+O(\varepsilon^2)+O(\varepsilon^{\frac{N-2}{2}})\\
				& <\mathcal{C}_{g}+ \frac{1}{N}\mathcal{S}^{\frac{N}{2}},
			\end{aligned}
		\end{equation}
		for $\varepsilon$ small enough. This completes the proof.
	\end{proof}
	\vskip 0.1in
	
	\section{ Proof of Theorem \ref{Theorem 1}}\label{Sect3}
	In this section, we will prove Theorem \ref{Theorem 1}  by using the subcritical approximation method. Now we consider the following subcritical problem
	
	\begin{equation} \label{System2}
		\begin{cases}
			-\Delta u=\lambda u+
			|u|^{p-2}u+\theta  u\log u^2, \\
			u \in H_0^1(\Omega), \quad  \Omega \subset \R^N,
		\end{cases}
	\end{equation}
	where  $N\geq 4$, $\lambda\in \R$, $\theta>0$ and   $p\in \sbr{2,2^*}$. Define the associated functional by
	\begin{equation} \label{Fun L_p}
		\begin{aligned}
			\mathcal{L}_p(u)&:=\frac{1}{2}\int_{\Omega} |\nabla u|^2-\frac{\lambda}{2}\int_{\Omega}|u|^2-\frac{1}{p} \int_{\Omega}|u|^{p}-\frac{\theta}{2}\int_{\Omega}u^2(\log u^2-1),
		\end{aligned}
	\end{equation}
	and the energy level by
	\begin{equation} \label{Defi of energy-p}
		\mathcal{B}_p := \inf_{u\in \mathcal{M}_p} \mathcal{L}_p(u),
	\end{equation}
	where
	\begin{equation}\label{Nehari manifold N_p}
		\mathcal{N}_{p}:=\lbr{u\in  H_0^1(\Omega) :  u\not \equiv 0,  \mathcal{G}_p(u):=\mathcal{L}_p^\prime(u)u=0 },
	\end{equation}
	and
	\begin{equation}\label{Nehari manifold signchaning N_{sp}}
		\mathcal{M}_{p}:=\lbr{u\in  \mathcal{N}_p:  u^+\in \mathcal{N}_p, u^-\in \mathcal{N}_p }.
	\end{equation}
	Then we  have  the following result.
	\begin{proposition}\label{Lemma achieved p}
		$\mathcal{B}_p$ is  achieved. Moreover,  if  $u \in \mathcal{M}_p$ such that $\mathcal{L}_p(u)=\mathcal{B}_p$, then $u$ is a  solution of the subcritical problem \eqref{System2}.
	\end{proposition}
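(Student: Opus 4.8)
The plan is to apply the direct method of the calculus of variations to the constrained minimization problem $\mathcal{B}_p=\inf_{\mathcal{M}_p}\mathcal{L}_p$, exploiting the fact that the subcritical exponent $p<2^*$ restores the compactness of the embedding $H_0^1(\Omega)\hookrightarrow L^q(\Omega)$ for every $q\in[1,2^*)$, which is exactly what fails in the critical problem \eqref{System1}. First I would fix a minimizing sequence $\{u_n\}\subset\mathcal{M}_p$ with $\mathcal{L}_p(u_n)\to\mathcal{B}_p$ and establish its boundedness in $H_0^1(\Omega)$. The convenient identity here is $\mathcal{L}_p(u)-\tfrac12\mathcal{G}_p(u)=(\tfrac12-\tfrac1p)|u|_p^p+\tfrac{\theta}{2}|u|_2^2$, which on $\mathcal{N}_p$ reduces to $\mathcal{L}_p(u)=(\tfrac12-\tfrac1p)|u|_p^p+\tfrac{\theta}{2}|u|_2^2$ and therefore controls $|u_n|_p$ and $|u_n|_2$ along the sequence. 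The gradient bound is then recovered from the Nehari constraint $|\nabla u_n|_2^2=\lambda|u_n|_2^2+|u_n|_p^p+\theta\int_\Omega u_n^2\log u_n^2$, where the only delicate term is handled by the elementary inequality $s^2|\log s^2|\le C_\delta(|s|^{2-\delta}+|s|^{2+\delta})$ with $0<\delta<2^*-2$, which bounds the logarithmic integral by $L^{2\pm\delta}$ norms already under control. The same bound applies to $u_n^\pm$, so both nodal parts stay bounded.

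Next I would pass to a subsequence with $u_n\rightharpoonup u$ in $H_0^1(\Omega)$, $u_n\to u$ in every $L^q(\Omega)$, $q<2^*$, and a.e.; consequently $u_n^\pm\rightharpoonup u^\pm$ and $u_n^\pm\to u^\pm$ in $L^q$ and a.e. The same inequality $s^2|\log s^2|\le C_\delta(|s|^{2-\delta}+|s|^{2+\delta})$ lets me pass to the limit in all the logarithmic integrals through a Vitali/dominated-convergence argument, so that $\int_\Omega (u_n^\pm)^2\log(u_n^\pm)^2\to\int_\Omega (u^\pm)^2\log(u^\pm)^2$. The crucial point — and the one I expect to be the main obstacle — is to prove that $u^+\not\equiv0$ and $u^-\not\equiv0$. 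In the pure-power problem this is immediate from a uniform bound $\inf_{\mathcal{N}_p}|w|_p>0$ coming directly from Sobolev's inequality, but the logarithmic term is sign-indefinite and non-homogeneous, so this lower bound is no longer automatic and must be extracted from a combination of the Nehari identity, the logarithmic Sobolev inequality, and the estimate above; the fact that $s\log s^2\to-\infty$ as $s\to0^+$ is precisely what forces $\inf_{\mathcal{N}_p}\mathcal{L}_p>0$ for every $\lambda\in\R$. Granting such a positive lower bound, $\mathcal{L}_p(u_n^\pm)$ cannot tend to $0$, whereas $u^\pm\equiv0$ would force $u_n^\pm\to0$ in $H_0^1(\Omega)$ and hence $\mathcal{L}_p(u_n^\pm)\to0$, a contradiction.

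With $u^\pm\not\equiv0$ in hand, I would run the standard fibering analysis: for each sign the map $s\mapsto\mathcal{G}_p(su^\pm)/s^2$ is strictly decreasing on $(0,\infty)$, tends to $+\infty$ as $s\to0^+$ and to $-\infty$ as $s\to\infty$, so there is a unique $t^\pm>0$ with $t^\pm u^\pm\in\mathcal{N}_p$. Weak lower semicontinuity of the Dirichlet integral together with the strong convergence of the remaining (including logarithmic) terms gives $\mathcal{G}_p(u^\pm)\le0$, which by monotonicity forces $t^\pm\le1$. Setting $J(w):=(\tfrac12-\tfrac1p)|w|_p^p+\tfrac{\theta}{2}|w|_2^2$, which converges strongly and coincides with $\mathcal{L}_p$ on $\mathcal{N}_p$, the chain $\mathcal{B}_p\le\mathcal{L}_p(t^+u^++t^-u^-)=J(t^+u^+)+J(t^-u^-)\le J(u^+)+J(u^-)=\lim_n\mathcal{L}_p(u_n)=\mathcal{B}_p$ shows every inequality is an equality; since the coefficients of $J$ are positive and $u^\pm\not\equiv0$, equality in $J(t^\pm u^\pm)\le J(u^\pm)$ forces $t^\pm=1$. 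Hence $u^\pm\in\mathcal{N}_p$, i.e. $u\in\mathcal{M}_p$, and $\mathcal{L}_p(u)=\mathcal{B}_p$.

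Finally, to show the minimizer solves \eqref{System2}, I would use Lagrange multipliers for the two constraints $g_1(u)=\mathcal{G}_p(u^+)=0$ and $g_2(u)=\mathcal{G}_p(u^-)=0$, writing $\mathcal{L}_p'(u)=\mu_1 g_1'(u)+\mu_2 g_2'(u)$. Because $g_i$ depends only on the corresponding nodal part, testing this identity against $u^+$ and $u^-$ decouples it into a diagonal $2\times2$ system whose diagonal entries are $\partial_s\mathcal{G}_p(su^\pm)\big|_{s=1}=(2-p)|u^\pm|_p^p-2\theta|u^\pm|_2^2$. Since $p>2$, $\theta>0$ and $u^\pm\not\equiv0$, these entries are strictly negative, the system is nonsingular, and one concludes $\mu_1=\mu_2=0$. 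Therefore $\mathcal{L}_p'(u)=0$, i.e. $u$ is a sign-changing weak solution of the subcritical problem \eqref{System2}, completing the proof.
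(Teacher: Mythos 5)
Your treatment of the compactness part is essentially the paper's argument: the identity $\mathcal{L}_p-\tfrac12\mathcal{G}_p=(\tfrac12-\tfrac1p)|\cdot|_p^p+\tfrac{\theta}{2}|\cdot|_2^2$, boundedness in $H_0^1(\Omega)$ (the paper uses the logarithmic Sobolev inequality of Lieb--Loss where you use $s^2|\log s^2|\le C_\delta(|s|^{2-\delta}+|s|^{2+\delta})$; both work), the fibering map with $t^\pm\le 1$, and the chain of equalities through $J$. The nontriviality of $u^\pm$, which you flag as the main obstacle and leave as a declared lemma, is in fact the easy part once you write the Nehari constraint for $u_n^\pm$ with the pointwise bound $\lambda s^2+\theta s^2\log s^2=\theta s^2\log(e^{\lambda/\theta}s^2)\le C_p|s|^p$: squeezing $\mathcal{S}_p|u_n^\pm|_p^2\le|\nabla u_n^\pm|_2^2\le C_p|u_n^\pm|_p^p$ gives $|u_n^\pm|_p\ge c>0$ uniformly, and strong $L^p$ convergence transfers this to $u^\pm$. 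So up to that point your proposal is sound and matches the paper.

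The genuine gap is in your last step. The Lagrange multiplier rule $\mathcal{L}_p'(u)=\mu_1 g_1'(u)+\mu_2 g_2'(u)$ with $g_1(u)=\mathcal{G}_p(u^+)$, $g_2(u)=\mathcal{G}_p(u^-)$ is not available: the maps $u\mapsto u^\pm$ are not differentiable from $H_0^1(\Omega)$ to $H_0^1(\Omega)$, so $g_1,g_2$ are not $C^1$ functionals and $\mathcal{M}_p$ is not a $C^1$ manifold. Your computation of the ``diagonal entries'' $\partial_s\mathcal{G}_p(su^\pm)\big|_{s=1}=(2-p)|u^\pm|_p^p-2\theta|u^\pm|_2^2<0$ only uses directional derivatives along $u^+$ and $u^-$ (which do exist, since $(u+\varepsilon u^+)^+=(1+\varepsilon)u^+$), but the multiplier identity itself requires $g_i'(u)$ to exist as elements of $H^{-1}$, which is exactly what fails. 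This is the standard difficulty with nodal Nehari sets, and it is why the paper instead argues by contradiction with a quantitative deformation: assuming $\mathcal{L}_p'(\bar u)\varphi\le-1$ for some $\varphi$, it perturbs the two-parameter family $s\bar u^++t\bar u^-$ by $\varepsilon\eta(s,t)\varphi$ with a cut-off $\eta$, shows the perturbed family stays strictly below $\mathcal{B}_p$, and uses Brouwer degree (the invariance of $\deg(\vec\Psi,D,\mathbf 0)=1$ under the perturbation, which equals $1$ because $(1,1)$ is the unique maximum of $(s,t)\mapsto\mathcal{L}_p(s\bar u^++t\bar u^-)$) to locate a point of the perturbed family back on $\mathcal{M}_p$, contradicting the definition of $\mathcal{B}_p$. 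Your negativity computation is not wasted---it is what guarantees $(1,1)$ is a strict maximum of the fiber map, which the degree argument needs---but on its own it does not yield $\mathcal{L}_p'(u)=0$.
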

	
	\begin{proof}
		The proof is inspired by \cite{W.Shuai=Nonlineariyu=2019}, we give the details here for the reader's convenience.
		Take $\lbr{u_n} \subset \mathcal{M}_p$ be a minimizing sequence of $\mathcal{B}_p$, that is,
		\begin{equation}\label{s2}
			\mathcal{B}_p=\lim_{n\to \infty}	\mathcal{L}_p(u_n)=\lim_{n\to \infty}	\mbr{\mathcal{L}_p(u_n)-\frac{1}{2}\mathcal{G}_p(u_n)}=\lim_{n\to \infty} \mbr{\sbr{\frac{1}{2}-\frac{1}{p}}\int_{\Omega}|u_n|^{p}+\frac{\theta}{2}\int_{\Omega}|u_n|^{2}}.
		\end{equation}
		Then, $\lbr{u_n}$ is bounded in $L^2(\Omega)$ and $L^{p}(\Omega)$.
		
		Recalling the following useful inequality (see  \cite[Theorem 8.14]{Lieb=2001})
		\begin{equation}
			\int_{\Omega} u^2 \log u^2 \leq \frac{a}{\pi}|\nabla u|_2^2+\sbr{\log |u|_2^2-N(1+\log a)}|u|_2^2  \ \ \text{ for } u\in H_0^1(\Omega) \  \text{ and } \  a>0.
		\end{equation}
		Combining this inequality with the fact that $u_n\in \mathcal{M}_p$, one can easily see that $\lbr{u_n}$ is bounded in $H_0^1(\Omega)$ by taking   $2a\leq \pi$. Hence, passing to subsequence, we may assume that there exists $u\in H_0^1(\Omega)$ such that
		\begin{equation}\label{s25}
			\begin{aligned}
				&	u_n \rightharpoonup u \text{ weakly in } H_0^1(\Omega),\\
				&u_n \to u \ \text{ strongly in } L^{p}(\Omega) \text{ for } 2\leq p<2^*,\\
				&u_n \to u \ \text{ almost everywhere in } \Omega.
			\end{aligned}
		\end{equation}
		Since $\lbr{u_n} \subset \mathcal{M}_p$ and $s^2\log s^2\leq C_p s^{p}$ for $s>0$ and some constant $C_p>0$, we have the following
		\begin{equation}
			\begin{aligned}
				C|u_n^+|_{p}^{\frac{2}{p}}	\leq |\nabla u_n^+|_2^2
				=   |u_n^+|_{p}^{p} + \theta \int_{\Omega}( u_n^+)^2\log (e^{\frac{\lambda}{\theta}}(u_n^+)^2)
				\leq  C_p|u_n^+|_{p}^{p},
			\end{aligned}
		\end{equation}
		\begin{equation}
			\begin{aligned}
				C|u_n^-|_{p}^{\frac{2}{p}}	\leq |\nabla u^-|_2^2
				=   |u_n^-|_{p}^{p} + \theta \int_{\Omega}( u_n^-)^2\log (e^{\frac{\lambda}{\theta}}(u_n^-)^2)
				\leq  C_p|u_n^-|_{p}^{p},
			\end{aligned}
		\end{equation}
		which implies that $|u_n^+|_{p}^{p}\geq  C>0$ and $|u_n^-|_{p}^{p}\geq  C>0$. Then we can see from  \eqref{s25} that
		\begin{equation}
			|u ^+|_{p}^{p}\geq  C>0 ~\text{ and }~ |u ^-|_{p}^{p}\geq  C>0,
		\end{equation}
		which yields that $u^+\neq 0$ and $u^-\neq 0$.
		By using the weak lower semi-continuity  of norm and   the Sobolev compact embedding theorem again, we have
		\begin{equation}
			\begin{aligned}
				\int_{\Omega} |\nabla u^+|^2
				&\leq \liminf_{n\to \infty}		 \int_{\Omega}  |\nabla u_n^+|^2=\liminf_{n\to \infty}	 \mbr{\lambda\int_{\Omega}  |u_n^+|^2+\int_{\Omega} |u_n^+|^{p}+\theta\int_{\Omega}(u_n^+)^2\log (u_n^+)^2}\\
				&=\lambda \int_{\Omega} |u^+|^2+\int_{\Omega} |u^+|^{p}+\theta\int_{\Omega}(u^+)^2\log (u^+)^2,
			\end{aligned}-
		\end{equation}
		where the last equality is guaranteed by \cite[Lemma 2.3]{Deng-He-Pan=Arxiv=2022}.  Similarly, we have
		\begin{equation}
			\int_{\Omega} |\nabla u^-|^2 \leq \lambda\int_{\Omega}  |u^-|^2+\int_{\Omega} |u^-|^{p}+\theta\int_{\Omega}(u^-)^2\log (u^-)^2.
		\end{equation}
		Thus, there exist $s,t\in \left( 0,1\right] $ such that $\bar{u}=su^++tu^-\in \mathcal{M}_p$. Since $p\in \sbr{2,2^*}$, we deduce from \eqref{s2} that
		\begin{equation}
			\begin{aligned}
				\mathcal{B}_p&\leq \mathcal{L}_p(\bar{u})=\sbr{\frac{1}{2}-\frac{1}{p}}\sbr{s^{p}\int_{\Omega}|u^+ |^{p}+t^{p}\int_{\Omega}|u^- |^{p}}+\frac{\theta}{2}\sbr{s^2\int_{\Omega}|u^+|^{2}+t^2\int_{\Omega}|u^-|^{2}}\\
				&\leq \sbr{\frac{1}{2}-\frac{1}{p}}\int_{\Omega}|u |^{p}+\frac{\theta}{2}\int_{\Omega}|u|^{2} 	\leq \liminf_{n\to \infty}\mbr{\sbr{\frac{1}{2}-\frac{1}{p}}\int_{\Omega}|u_n |^{p}+\frac{\theta}{2}\int_{\Omega}|u_n|^{2} }\\
				& =	\mathcal{B}_p.
			\end{aligned}
		\end{equation}
		Therefore,  we conclude that $s=t=1$  and  $\mathcal{B}_p$ is achieved by $\bar{u} \in \mathcal{M}_p$, that is,    $\mathcal{L}_p(\bar{u})=\mathcal{B}_p$.
		
		Now we prove that $\bar{u}$ is a weak solution of the subcritical problem \eqref{System2}, that is, $\mathcal{L}_p^\prime(\bar{u})=0$.  Suppose by contrary that   the conclusion  does not hold. Then there exists a function $\varphi\in C_0^\infty(\Omega)$ such that
		\begin{equation}
			\mathcal{L}_p^\prime (\bar{u} )\varphi \leq -1.
		\end{equation}
		Taking $\varepsilon>0$ small enough such that
		\begin{equation} \label{f32}
			\mathcal{L}_p^\prime (s\bar{u} ^++t\bar{u} ^++\gamma\varphi )\varphi \leq -\frac{1}{2}, \text{ for all } |s-1|+|t-1|+|\gamma|\leq 3\varepsilon.
		\end{equation}
		Let $\eta: \R^2 \to \mbr{0,1}$ be a cut-off function such that
		\begin{equation} \label{f34}
			\eta(s,t)=\begin{cases}
				1, \ \ \text{ if } |s-1|\leq \frac{1}{2}\varepsilon \text{ and } |t-1|\leq \frac{1}{2}\varepsilon,\\
				0 ,  \ \ \text{ if } |s-1|\geq  \varepsilon \text{ or  } |t-1|\geq  \varepsilon,\\
				\in (0,1), \ \ \text{ otherwise}.
			\end{cases}
		\end{equation}
		Then if $|s-1|\leq \varepsilon$ and $|t-1|\leq \varepsilon$, we can see from \eqref{f32} that
		\begin{equation}\label{f33}
			\begin{aligned}
				\mathcal{L}_p\sbr{s\bar{u}^++t\bar{u}^-+\varepsilon\eta(s,t)\varphi}
				&=\mathcal{L}_p\sbr{s\bar{u}^++t\bar{u}^-}+\sigma\varepsilon\eta(s,t)\int_{0}^{1}\mathcal{L}_p^\prime \sbr{s\bar{u}^++t\bar{u}^++\sigma\varepsilon\eta(s,t) }\varphi d \sigma\\
				&\leq \mathcal{L}_p\sbr{s\bar{u}^++t\bar{u}^-}-\frac{1}{2}\sigma\varepsilon\eta(s,t).
			\end{aligned}
		\end{equation}	
		If $|s-1|\leq \varepsilon$ or  $|t-1|\leq \varepsilon$, then $\eta(s,t)=0$ and the above inequality holds true.
		Observe that $$f(s,t):=\mathcal{L}_p\sbr{s\bar{u}^++t\bar{u}^-}=\mathcal{L}_p\sbr{s\bar{u}^+ }+\mathcal{L}_p\sbr{ t\bar{u}^-}.$$
		Similar to \eqref{s3}, we can easily check that $(1,1)$ is the unique maximum point of $f(s,t)$. If $(s,t)\neq (1,1)$, we have
		\begin{equation}
			\mathcal{L}_p\sbr{s\bar{u}^++t\bar{u}^-+\varepsilon\eta(s,t)\varphi}\leq  \mathcal{L}_p\sbr{s\bar{u}^++t\bar{u}^-} <f(1,1)=\mathcal{L}_p\sbr{ \bar{u}}=\mathcal{B}_p.
		\end{equation}
		If $(s,t)=(1,1)$, then $\eta(1,1)=1$ and
		\begin{equation}
			\mathcal{L}_p\sbr{s\bar{u}^++t\bar{u}^-+\varepsilon\eta(s,t)\varphi}\leq  f(1,1)- \frac{1}{2}\sigma\varepsilon\eta(1,1)<  \mathcal{B}_p.
		\end{equation}
		That is, for all $(s,t)\in \sbr{\R^+}^2$, we have
		\begin{equation}\label{s4}
			\mathcal{L}_p\sbr{s\bar{u}^++t\bar{u}^-+\varepsilon\eta(s,t)\varphi}<  \mathcal{B}_p.
		\end{equation}
		Let $g(s,t)=s\bar{u}^++t\bar{u}^- $,  $h(s,t)=s\bar{u}^++t\bar{u}^-+\varepsilon\eta(s,t)\varphi$, and
		\begin{equation}
			\vec{\Psi}_0(s,t)= \sbr{\Psi_0^+(s,t),\Psi_0^-(s,t)}:=\sbr{\mathcal{L}_p^\prime(g(s,t))g(s,t)^+,\mathcal{L}_p^\prime(g(s,t))g(s,t)^-}.
		\end{equation}
		\begin{equation}
			\vec{\Psi}_1(s,t)= \sbr{\Psi_1^+(s,t),\Psi_1^-(s,t)}:=\sbr{\mathcal{L}_p^\prime(h(s,t))h(s,t)^+,\mathcal{L}_p^\prime(h(s,t))h(s,t)^-}.
		\end{equation}
		Set  $D:=(\delta,2-\delta)\times (\delta,2-\delta)$ for $0<\delta<1-\varepsilon$. Then it follows from \eqref{f34} that $\vec{\Psi}_0=\vec{\Psi}_1$  on $\partial D$. Since  $(1,1)$ is the unique global maximum point of $f(s,t)$, we have  $\mathbf{0}=(0,0)\not \in\vec{\Psi}_0(\partial D) $ and $\vec{\Psi}_0(1,1)=\mathbf{0}$. By the Brouwer degree theory, it follows that
		\begin{equation}
			\deg (\vec{\Psi}_1,D, \mathbf{0}) =	\deg (\vec{\Psi}_0,D, \mathbf{0})=1.
		\end{equation}
		Therefore, there exists $ (s_0,t_0)\in D$ such that  $\vec{\Psi}_1(s_0,t_0)=0$. That is, $h(s_0,t_0) \in \mathcal{M}_p$. Combining this with \eqref{s4}, one gets
		$	\mathcal{B}_p\leq \mathcal{L}_p(h(s_0,t_0) )<\mathcal{B}_p$, which is a contradiction. From this, we can conclude that $\bar{u}$ is a critical point of $\mathcal{L}_p$, and hence is a weak solution of the problem \eqref{System2}. This completes the proof.
	\end{proof}
	\begin{proposition}\label{Lemma energy estimate2}
		Using the notations above,  we have
		\begin{equation}
			\limsup_{p\to 2^*} \mathcal{B}_p\leq \mathcal{B},
		\end{equation}
		where  $\mathcal{B} $, $\mathcal{B}_p$   are defined in \eqref{Defi of energy },   \eqref{Defi of energy-p} respectively.
	\end{proposition}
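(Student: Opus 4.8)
The plan is to bound $\mathcal{B}_p$ from above by projecting a fixed competitor for $\mathcal{B}$ onto the subcritical nodal Nehari set $\mathcal{M}_p$ and then letting $p\to 2^*$. Fix an arbitrary $w\in\mathcal{M}$, so that $w^+,w^-\in\mathcal{N}$; it suffices to prove $\limsup_{p\to 2^*}\mathcal{B}_p\leq \mathcal{L}(w)$, since taking the infimum over $w\in\mathcal{M}$ then yields the assertion. For each $p$ close to $2^*$ I would produce scalars $s_p,t_p>0$ with $s_pw^++t_pw^-\in\mathcal{M}_p$. Because $w^+$ and $w^-$ have disjoint supports, both $\mathcal{L}_p$ and $\mathcal{G}_p$ split across the two pieces, so it is enough to find $s_p$ with $s_pw^+\in\mathcal{N}_p$ and, independently, $t_p$ with $t_pw^-\in\mathcal{N}_p$. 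Then $\mathcal{B}_p\leq\mathcal{L}_p(s_pw^++t_pw^-)=\mathcal{L}_p(s_pw^+)+\mathcal{L}_p(t_pw^-)$, and the proof reduces to the two limits $s_p,t_p\to 1$ and $\mathcal{L}_p(s_pw^\pm)\to\mathcal{L}(w^\pm)$.

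For the projection I would study the \emph{fibering} function $s\mapsto\mathcal{G}_p(sw^+)$ on $s>0$. Expanding and using $(sw^+)^2\log(sw^+)^2=s^2(w^+)^2(\log s^2+\log(w^+)^2)$, then dividing by $s^2$ and invoking the identity $\int_\Omega|\nabla w^+|^2-\lambda\int_\Omega(w^+)^2-\theta\int_\Omega(w^+)^2\log(w^+)^2=\int_\Omega|w^+|^{2^*}$ (valid because $w^+\in\mathcal{N}$), the condition $sw^+\in\mathcal{N}_p$ becomes
\[
\Psi_p(s):=\int_\Omega|w^+|^{2^*}-s^{p-2}\int_\Omega|w^+|^p-\theta\,(\log s^2)\int_\Omega(w^+)^2=0.
\]
Here $\Psi_p$ is strictly decreasing on $(0,\infty)$, with $\Psi_p(s)\to+\infty$ as $s\to 0^+$ and $\Psi_p(s)\to-\infty$ as $s\to+\infty$, so there is a unique root $s_p$, and likewise a unique $t_p$ for $w^-$. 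One also notes $\Psi_p(1)=\int_\Omega|w^+|^{2^*}-\int_\Omega|w^+|^p\to 0$ as $p\to 2^*$.

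To get $s_p\to 1$ I would first show $s_p$ stays in a compact subinterval of $(0,\infty)$: the equation $\Psi_p(s_p)=0$ rules out $s_p\to 0$ (which would send the left side to $-\infty$) and $s_p\to\infty$ (which would send it to $+\infty$). Since $\Psi_p\to\Psi_{2^*}$ uniformly on compact subsets of $(0,\infty)$ and $\Psi_{2^*}$ is strictly decreasing with unique root $1$, every subsequential limit of $s_p$ equals $1$, hence $s_p\to 1$; the same argument gives $t_p\to 1$. Finally, $\mathcal{L}_p(sw^\pm)$ is jointly continuous in $(s,p)$: the only genuinely $p$-dependent term is $\frac{s^p}{p}\int_\Omega|w^\pm|^p$, while the logarithmic integral $\int_\Omega(sw^\pm)^2\log(sw^\pm)^2$ is finite (bounded below pointwise by $-|\Omega|/e$ and above by the logarithmic Sobolev inequality used in Proposition \ref{Lemma achieved p}) and depends continuously on $s$ alone. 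Passing to the limit, $\mathcal{L}_p(s_pw^\pm)\to\mathcal{L}(w^\pm)$, so $\limsup_{p\to 2^*}\mathcal{B}_p\leq\mathcal{L}(w^+)+\mathcal{L}(w^-)=\mathcal{L}(w)$, and an infimum over $w\in\mathcal{M}$ closes the argument.

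The main obstacle is the logarithmic term, which enters both the Nehari constraint and the energy and destroys the pure homogeneity that makes the classical projection onto $\mathcal{N}_p$ transparent. The compensating observation is that under the scaling $sw^+$ the logarithmic nonlinearity contributes the explicit increasing term $\theta(\log s^2)\int_\Omega(w^+)^2$, which keeps $\Psi_p$ strictly monotone; this preserves both the existence and uniqueness of the fibering root and the rigidity needed to pin down $s_p\to 1$. The remaining technical points, namely the uniform convergence $\Psi_p\to\Psi_{2^*}$ and the continuity of the logarithmic integral under scaling, are routine since $w$ is held fixed throughout.
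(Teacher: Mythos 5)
Your proposal is correct and follows essentially the same route as the paper: fix a competitor $w\in\mathcal{M}$, rescale $w^+$ and $w^-$ separately into $\mathcal{N}_p$ (possible because the supports are disjoint, so $\mathcal{L}_p$ and $\mathcal{G}_p$ split), and let $p\to 2^*$; the paper sidesteps proving $s_p,t_p\to 1$ by instead using a uniform $\varepsilon$-approximation of $\mathcal{L}_p,\mathcal{G}_p$ by $\mathcal{L},\mathcal{G}$ on the bounded segment $\{hw^{\pm}:0<h\leq H\}$ together with the fact that $(1,1)$ maximizes $(s,t)\mapsto\mathcal{L}(sw^+)+\mathcal{L}(tw^-)$, whereas your explicit fibering equation $\Psi_p(s)=0$ and its strict monotonicity achieve the same end. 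One harmless slip: in your compactness step the two limits are swapped --- $\Psi_p(s)\to+\infty$ as $s\to 0^+$ (the $-\theta\log s^2$ term dominates) and $\Psi_p(s)\to-\infty$ as $s\to\infty$, consistent with what you stated earlier in the paragraph --- but either way the root $s_p$ is confined to a compact subinterval of $(0,\infty)$, so the argument is unaffected.
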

	
	\begin{proof}
		For any $\varepsilon>0$, we  can  find $u\in \mathcal{M}$ such that $\mathcal{L}(u)\leq \mathcal{B}+\varepsilon$ and $\mathcal{G}(u^{\pm})= 0$. Then  there exists $H>0$ such that $\mathcal{G}(Hu^{\pm})\leq -1$. Since $p \to 2^*$, there exists $\delta>0$ such that
		\begin{equation}
			|\mathcal{L}(hu^{\pm})-\mathcal{L}_p(hu^{\pm})|+|\mathcal{G}(hu^{\pm})-\mathcal{G}_p(hu^{\pm})|\leq \varepsilon, ~\text{ for any }~ 2^*-\delta<p<2^*~ \text{ and }~ 0<h\leq H.
		\end{equation}
		Then we have $\mathcal{G}_p(Hu^{\pm})\leq -\frac{1}{2}$ for any $2^*-\delta<p<2^*$, it is easy to see that there exist $s,t\in  \sbr{0,H} $ such that $\mathcal{G}_p(su^{+})=\mathcal{G}_p(tu^{-})=0$. That is,
		\begin{equation}
			\tilde{u}:=su^{+}+tu^{-}\in\mathcal{M}_p.
		\end{equation}
		Since $u\in \mathcal{M}$, one can easily check that $(1,1)$ is the unique maximum point of  $f(s,t)=\mathcal{L}( su^{+})+\mathcal{L}( tu^{-})$.
		Then for any $2^*-\delta<p<2^*$, we have
		\begin{equation}
			\begin{aligned}
				\mathcal{B}_p &\leq \mathcal{L}_p(\tilde{u}) =\mathcal{L}_p( su^{+}) +\mathcal{L}_p( tu^{-})  \leq \mathcal{L} ( su^{+}) +\mathcal{L} ( tu^{-})+2\varepsilon \leq \mathcal{L}(u)+2\varepsilon\leq \mathcal{B}+3\varepsilon.
			\end{aligned}
		\end{equation}
		This completes the proof since $\varepsilon$ is arbitrary.
	\end{proof}
	
	\begin{proof}[ \bf Proof of Theorem \ref{Theorem 1} ]
		By Proposition \ref{Lemma energy estimate1} and \ref{Lemma energy estimate2}, we have
		\begin{equation}\label{s5}
			\limsup_{p\to 2^*} \mathcal{B}_p\leq \mathcal{B}<\mathcal{C}+\frac{1}{N}\mathcal{S}^{\frac{N}{2}}.
		\end{equation}
		Taking $p_n\in\sbr{2,2^* }$,
		we  deduce from Proposition \ref{Lemma achieved p} that    problem \eqref{System2} has a sign-changing solution $ u_{n}\in \mathcal{M}_{p_n}$ with $\mathcal{L}_{p_n}(u_{n})=\mathcal{B}_{p_n}$ for each $n\in \mathbb{N}$,  and
		\begin{equation}\label{s6}
			\int_{\Omega}	 \nabla u_n\cdot \nabla\varphi =\lambda \int_{\Omega}  u_n\varphi+
			\int_{\Omega}  |u_n|^{p_n-2}u_n \varphi+ \theta \int_{\Omega} u_n\varphi \log u_n^2 ~\text{ for }~ \varphi\in C_0^{\infty}(\Omega).
		\end{equation}
		Letting $p_n\to 2^*$ as $n\to \infty$, by a similar proof to that in Proposition \ref{Lemma achieved p}, we can see from  $\eqref{s5}$  that $\lbr{u_n}$ is bounded in $H_0^1(\Omega)$. Hence, passing to subsequence, we may assume that there exists $u\in H_0^1(\Omega)$ such that
		\begin{equation}\label{s7}
			\begin{aligned}
				&	u_n \rightharpoonup u \text{ weakly in } H_0^1(\Omega),\\
				&u_n \to u \ \text{ strongly in } L^{p}(\Omega) \text{ for } 2\leq p<2^*,\\
				&u_n \to u \ \text{ almost everywhere in } \Omega.
			\end{aligned}
		\end{equation}
		Since $u_n$ satisfies \eqref{s6}, one can easily show that $\mathcal{L}^{\prime}(u)\varphi=0$ for $\varphi\in C_0^{\infty}(\Omega)$ by taking the limit $n\to \infty$.
		Set $w_n=u_n-u$.  Then by the Br\'{e}zis-Lieb Lemma  \cite{Brezis Lieb lemma}  and \cite[Lemma 2.3]{Deng-He-Pan=Arxiv=2022}, we have
		\begin{equation}\label{f9}
			\begin{aligned}
				&\int_{\Omega}|u_n^+|^2 =\int_{\Omega}|u^+|^2 +\int_{\Omega}|w_n^+| ^2+o_n(1) ~\text{ and } ~\int_{\Omega} u_n^2\log u_n^2=\int_{\Omega} u ^2\log u ^2 +o_n(1).
			\end{aligned}
		\end{equation}
		Then one can easily check the following
		\begin{equation}\label{s10}
			\begin{aligned}
				\mathcal{B}_{p_n}&=	\mathcal{L}_{p_n}(u_n)=\mathcal{L}_{p_n}(u_n)-\frac{1}{p_n}\mathcal{L}_{p_n}^{\prime}(u_n)u_n\\
				&= \sbr{\frac{1}{2}-\frac{1}{p_n}}\int_{\Omega}|\nabla u_n|^2-\sbr{\frac{1}{2}-\frac{1}{p_n}}\lambda\int_{\Omega}|  u_n|^2+\frac{\theta}{2}\int_{\Omega}|  u_n|^2-\sbr{\frac{1}{2}-\frac{1}{p_n}}\theta\int_{\Omega} u_n^2\log u_n^2\\
				&= \mathcal{L}(u)+\frac{1}{N} \int_{\Omega} |\nabla w_n|^2+o_n(1).
			\end{aligned}
		\end{equation}
		Now we consider the following cases.
		\vskip 0.1in
		{\bf Case 1:}   $u^+\equiv 0$ and  $u^-\equiv 0$.
		
		In this case,  we can see from \eqref{s7} and  $ u_{n}\in \mathcal{M}_{p_n}$   that
		\begin{equation}\label{s8}
			\begin{aligned}
				\int_{\Omega} |\nabla u_n^+|^2  &=\lambda \int_{\Omega} |u_n^+|^2+\int_{\Omega} |u_n^+|^{p_n}+\theta\int_{\Omega}(u_n^+)^2\log (u_n^+)^2\\
				&	\leq  \lambda \int_{\Omega} |u_n^+|^2 +\frac{p_n-q_0}{2^*-q_0} \int_{\Omega} |u_n|^{2^*}+\frac{2^*-p_n}{2^*-q_0} \int_{\Omega} |u_n^+|^{q_0}+\theta C_{q_0} \int_{\Omega}|u_n^+|^{q_0}\\
				&\leq \int_{\Omega} |u_n^+|^{2^*} +o_n(1),
			\end{aligned}
		\end{equation}
		where $0<q_0<2^*-\delta $ for some positive small constant $\delta$. Then we deduce  from the Sobolev embedding theorem
		that
		\begin{equation}
			\int_{\Omega} |u_n^+|^{2^*}\geq \mathcal{S}^{\frac{N}{2}}+o_n(1),
		\end{equation}
		and  therefore
		\begin{equation}\label{s9}
			\begin{aligned}
				\liminf_{n\to \infty} \mathcal{L}_{p_n}(u_{ n}^+)&=	\liminf_{n\to \infty}\mbr{\sbr{\frac{1}{2}-\frac{1}{p_n}}\int_{\Omega}|u_n^+ |^{p_n}+\frac{\theta}{2}\int_{\Omega}|u_n^+|^{2} }\\
				&\geq \liminf_{n\to \infty} \sbr{\frac{1}{2}-\frac{1}{2^*}}\int_{\Omega}|u_n^+ |^{2^*}\geq \frac{1}{N}\mathcal{S}^{\frac{N}{2}}.
			\end{aligned}
		\end{equation}
		Recalling that  $w_n^+:=u_n^+- u^+=u_n^+.$
		By using \eqref{s8} again, we have
		\begin{equation}
			\int_{\Omega} |\nabla w_n^+|^2  \leq \int_{\Omega} |w_n^+|^{2^*} +o_n(1)\leq \mathcal{S}^{-\frac{2^*}{2}}\sbr{\int_{\Omega} |\nabla w_n^+|^2}^{\frac{2^*}{2}} +o_n(1).
		\end{equation}
		From this we deduce that either $\int_{\Omega} |\nabla w_n^+|^2=o_n(1)$ or  $\liminf\limits_{n\to \infty}	\int_{\Omega} |\nabla w_n^+|^2  \geq \mathcal{S}^{\frac{N}{2}}$ holds.
		
		For the case $\int_{\Omega} |\nabla w_n^+|^2=o_n(1)$, we have $u_n^+\to  0$ strongly in $H_0^1(\Omega)$, then $\mathcal{L}_{p_n}(u_n^+)\to 0$ as $n\to \infty$, which contradicts to \eqref{s9}. Therefore, we have $\liminf\limits_{n\to \infty}	\int_{\Omega} |\nabla w_n^+|^2  \geq \mathcal{S}^{\frac{N}{2}}$. Similarly, we have $\liminf\limits_{n\to \infty}	\int_{\Omega} |\nabla w_n^-|^2  \geq \mathcal{S}^{\frac{N}{2}}$. Hence, \eqref{s10} implies that
		\begin{equation}
			\liminf\limits_{n\to \infty}	\mathcal{B}_{p_n} \geq  \frac{1}{N} \liminf\limits_{n\to \infty}\int_{\Omega} |\nabla w_n^+|^2+ \frac{1}{N} \liminf\limits_{n\to \infty}\int_{\Omega} |\nabla w_n^-|^2 \geq \frac{2}{N}\mathcal{S}^{\frac{N}{2}}>\mathcal{C} + \frac{1}{N}\mathcal{S}^{\frac{N}{2}},
		\end{equation}
		which contradicts to \eqref{s5}. Therefore, Case 1 is impossible.
		
		\vskip 0.1in
		{\bf Case 2:}   $u^+ \not \equiv 0$, $u^-\equiv 0$ or   $u^+  \equiv 0$, $u^-\not\equiv 0$.

		Without loss of generality, we assume that $u^+ \not \equiv 0$  , $u^-\equiv 0$. Since $\mathcal{L}^{\prime}(u)\varphi=0$ for $\varphi\in C_0^{\infty}(\Omega)$,   then $u^+$ is a  weak solution of \eqref{System1} and therefore $u^+\in\mathcal{N}$. Similar to Case 1, we can prove that $\liminf\limits_{n\to \infty}	\int_{\Omega} |\nabla w_n^-|^2  \geq \mathcal{S}^{\frac{N}{2}}$. Then by using  \eqref{s10}  again, we have
		\begin{equation}
			\liminf\limits_{n\to \infty}	\mathcal{B}_{p_n} \geq  \mathcal{L}(u^+)+ \frac{1}{N} \liminf\limits_{n\to \infty}\int_{\Omega} |\nabla w_n^-|^2  \geq \mathcal{C} + \frac{1}{N}\mathcal{S}^{\frac{N}{2}},
		\end{equation}
		which  also contradicts to \eqref{s5}. Therefore, Case 2 is impossible.
		
		Since Case 1  and  Case 2 are both impossible,  we know that $u^+\not \equiv0$ and  $u^-\not \equiv0$. By $\mathcal{L}^{\prime}(u) =0$, we have $\mathcal{L}^{\prime}(u)u^+=\mathcal{L}^{\prime}(u)u^-=0$, yielding that $u\in \mathcal{M}$. Then $ 	\mathcal{L}(u)\geq \mathcal{B}  $. On the other hand, by the weak lower semi-continuity of norm, we deduce from Proposition \ref{Lemma energy estimate2} that
		\begin{equation}
			\begin{aligned}
				\mathcal{L}(u)&=\mathcal{L}(u)-\frac{1}{2}\mathcal{L}^{\prime}(u)u= \sbr{\frac{1}{2}-\frac{1}{2^*}}\int_{\Omega}|u|^{2^*}+\frac{\theta}{2}\int_{\Omega}|u|^{2}\\
				&\leq \liminf_{n\to \infty}\mbr{\sbr{\frac{1}{2}-\frac{1}{p_n}}\int_{\Omega}|u_n |^{p_n}+\frac{\theta}{2}\int_{\Omega}|u_n|^{2} }=\liminf_{n\to \infty}\mathcal{B}_{p_n} \leq \mathcal{B}.
			\end{aligned}
		\end{equation}
		That is, $\mathcal{L}(u)=\mathcal{B}$. By using the same arguments as used in that of  Proposition \ref{Lemma achieved p}, we can   prove that $u$ is a  sign-changing solution of problem \eqref{System1}.
		
		Finally, we claim that $u$ has exactly two nodal domains by following the strategies in  Castro et al \cite{Castro=1997=Rocky Mountain J M}.  Assume that $u$ has three nodal domains, $D_1$, $D_2$ and $D_3$. Moreover, we assume that $D_1$,$D_2$ are positive nodal domains, $D_3$   is a negative nodal domain. Then $u|_{D_1\cup D_3}\in \mathcal{M} $ and $u|_{  D_2}\in \mathcal{N} $. Therefore, we have  $\mathcal{L}(u)\geq \mathcal{B}+\mathcal{C} $, which contradicts to the fact  $\mathcal{L}(u)=\mathcal{B}$. Hence, the claim is true.

	\end{proof}
	
	\section{ The proof of Theorem \ref{Theorem 2}}\label{Sect4}
	In this section, we construct the infinitely many radial nodal solutions for equation \eqref{System1} when $\Omega$ is a ball.
	From now on, we assume that $\Omega=B_R:=\lbr{ x\in \R^N:|x|\leq R}$. Let  $H_{rad}(\Omega)$ be the subspace of  $H_0^1(\Omega)$ consisting of radial functions.
	
	As in  \cite{Cerami-Solimini-Struwe 1986,Struwe=1976=Arch math}, for any fixed positive integer $k\in \N$, we define
	\begin{equation}
		\begin{aligned}
			\mathcal{M}_{k^\pm}:=  &\left\lbrace u\in H_{rad}(\Omega): \exists~ 0=:r_0<r_1<r_2<\ldots<r_k:=R, \text{ such that  for}~ 1\leq j\leq k, \right.\\
			&\left.  u(r_j)=0 ~, \pm (-1)^{j-1}u(x)\geq 0,  u\not \equiv 0~\text{ in } ~\Omega_j, \text{ and }~ \mathcal{L}^\prime(u^{(j)})u^{(j)}=0   \right\rbrace,
		\end{aligned}
	\end{equation}
	where $	\Omega_1:=\lbr{x\in B_R:|x|\leq r_1} $ is the first node of $u$, and $	\Omega_j:=\lbr{x\in B_R:r_{j-1}< |x|\leq r_j} $ is the j-th node of $u$ for $2\leq j\leq k$. Here, let $u^{(j)}:=u$ in $\Omega_j$ and $u^{(j)}:=0$ outside $\Omega_j$.

	Observe that the sets $\mathcal{M}_{k^\pm} $ are not empty for any $k\in \N$. Without loss of generality, we may focus on the case of $\mathcal{M}_{k^+}$ and redefine $\mathcal{M}_{k^+}$ as $\mathcal{M}_{k} $ for the sake of convenience. The case of $\mathcal{M}_{k^-}$ can be treat similarly with some slight modifications.
	
	Define
	\begin{equation}
		\mathcal{B}_k:= \inf_{u\in \mathcal{M}_k } \mathcal{L}(u),~~ \text{ for } k\in \N.
	\end{equation}
	Then we have the following
	\begin{lemma} \label{Lemma N nodes energy estimate}
		Under the hypotheses of Theorem \ref{Theorem 2}. Assume that there exists $v\in \mathcal{M}_k$ satisfying $\mathcal{L}(v)=\mathcal{B}_k$, then for $k\in \N$, we have
		\begin{equation}
			\mathcal{B}_{k+1}<\mathcal{B}_k+\frac{1}{N}\mathcal{S}^{\frac{N}{2}}.
		\end{equation}
	\end{lemma}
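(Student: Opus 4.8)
The plan is to transplant the one-bubble argument of Proposition \ref{Lemma energy estimate1} to the radial multi-annulus setting: using the given minimizer $v\in\mathcal{M}_k$ as a fixed profile, I would insert one further Aubin--Talenti bubble concentrated at the origin to manufacture a competitor with $k+1$ alternating nodal annuli, and then show its energy stays below $\mathcal{B}_k+\frac{1}{N}\mathcal{S}^{\frac{N}{2}}$. The hypothesis that $\mathcal{B}_k$ is attained is essential: it supplies a concrete bounded profile $v$, with bounded nodal pieces $v^{(1)},\dots,v^{(k)}$ (by elliptic regularity for the minimizer, exactly as for $u_g$ in Section \ref{Sect2}), on top of which the bubble is glued. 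Note that for $k=1$ this construction reduces precisely to Proposition \ref{Lemma energy estimate1}.

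First I would fix the geometry. As the innermost node satisfies $r_1>0$, I choose $\rho>0$ with $2\rho<r_1$, so that the cut-off bubble $\psi_\varepsilon=\xi U_\varepsilon$ (supported in $B_{2\rho}$) lies strictly inside $\Omega_1=B_{r_1}$ and meets none of the nodes $r_1<\dots<r_k=R$. Writing $v=\sum_{j=1}^{k}v^{(j)}$, I consider the $(k+1)$-parameter family
\[
\Phi(\alpha_0,\dots,\alpha_k)=-\alpha_0\psi_\varepsilon+\sum_{j=1}^{k}\alpha_j v^{(j)},\qquad \alpha_0,\dots,\alpha_k>0.
\]
Because $U_\varepsilon(0)\to\infty$ while $v$ is bounded, for $\varepsilon$ small $\Phi<0$ near the origin, whereas $\Phi=\alpha_1 v^{(1)}\ge0$ on $\{2\rho<|x|<r_1\}$; hence for small $\varepsilon$ one extra node $r_1'\in(0,2\rho)$ is created and $\Phi$ has exactly $k+1$ nodal annuli of alternating sign, which (using the evenness $\mathcal{L}(-u)=\mathcal{L}(u)$) we may take to be those of $\mathcal{M}_{k+1}$.

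Next I would project onto $\mathcal{M}_{k+1}$ and estimate the energy. As in Proposition \ref{Lemma energy estimate1}, one checks $\mathcal{L}(\Phi)\le0$ once $\alpha_0^2+\dots+\alpha_k^2$ is large (the energy of each nodal piece tends to $-\infty$), so only bounded parameters matter; then Miranda's theorem \cite{Miranda}, applied to the map $(\alpha_0,\dots,\alpha_k)\mapsto\big(\mathcal{L}'(\Phi)\Phi^{(i)}\big)_{i}$ of the $k+1$ Nehari quantities of the nodal pieces $\Phi^{(i)}$, yields $\tilde\alpha$ with $\Phi(\tilde\alpha)\in\mathcal{M}_{k+1}$, whence $\mathcal{B}_{k+1}\le\mathcal{L}(\Phi(\tilde\alpha))\le\sup_{\alpha}\mathcal{L}(\Phi(\alpha))$. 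Since the supports of the $v^{(j)}$ are disjoint and only $v^{(1)}$ meets $\psi_\varepsilon$,
\[
\mathcal{L}(\Phi)=\mathcal{L}\sbr{-\alpha_0\psi_\varepsilon+\alpha_1 v^{(1)}}+\sum_{j=2}^{k}\mathcal{L}\sbr{\alpha_j v^{(j)}}.
\]
Treating the first term as in Lemma \ref{Lemma ES-4} and Proposition \ref{Lemma energy estimate1} (with $u_g$ replaced by the bounded, $C^2$ profile $v^{(1)}$, so that the $2^*$- and $u^2\log u^2$-cross terms and the quadratic cross terms are all $O(\varepsilon^{\frac{N-2}{2}})$), using that each $\alpha_j\mapsto\mathcal{L}(\alpha_j v^{(j)})$ is maximized at $\alpha_j=1$ because $v^{(j)}\in\mathcal{N}$ (the computation \eqref{F8}--\eqref{s3}), and controlling the bubble term via Lemma \ref{Lemma ES-1} and Lemma \ref{Lemma ES-3} exactly like $f_2(\beta)$ in Proposition \ref{Lemma energy estimate1}, I obtain
\[
\sup_{\alpha}\mathcal{L}(\Phi)\le\sum_{j=1}^{k}\mathcal{L}(v^{(j)})+\frac{1}{N}\mathcal{S}^{\frac{N}{2}}-C\theta\varepsilon^2|\log\varepsilon|+O(\varepsilon^2)+O(\varepsilon^{\frac{N-2}{2}}).
\]
As $\sum_{j=1}^{k}\mathcal{L}(v^{(j)})=\mathcal{L}(v)=\mathcal{B}_k$ and $N\ge6$ forces $\varepsilon^2|\log\varepsilon|$ to dominate both $\varepsilon^2$ and $\varepsilon^{\frac{N-2}{2}}$, the right-hand side is $<\mathcal{B}_k+\frac{1}{N}\mathcal{S}^{\frac{N}{2}}$ for $\varepsilon$ small, which proves the lemma.

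The step I expect to be delicate is the degree-theoretic projection onto $\mathcal{M}_{k+1}$. Unlike in Proposition \ref{Lemma energy estimate1}, the innermost region now carries \emph{two} nodal pieces of $\Phi$, both built from the single combination $-\alpha_0\psi_\varepsilon+\alpha_1 v^{(1)}$, so the two Nehari constraints there are a priori coupled to both $\alpha_0$ and $\alpha_1$. I would resolve this using the concentration of $\psi_\varepsilon$: since $r_1'$ is of order $\varepsilon$, on $B_{r_1'}$ the inner piece equals $-\alpha_0\psi_\varepsilon$ up to $O(\varepsilon^{\frac{N-2}{2}})$ and on $\{r_1'<|x|<r_1\}$ the next piece equals $\alpha_1 v^{(1)}$ up to the same order, so the Nehari map decouples to leading order into a nearly triangular system whose Brouwer degree on a suitable box is nonzero. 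A secondary point is the uniformity of all bubble estimates over the bounded parameters $\alpha_0,\dots,\alpha_k$, which holds because $v$ and its nodal pieces are uniformly bounded.
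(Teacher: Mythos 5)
Your route is genuinely different from the paper's, and it is much heavier than it needs to be. The paper does not glue a bubble onto $v$ at all: it introduces the ground-state level $\mathcal{C}(r_1)$ on the inner ball $B_{r_1}$ (infimum of $\mathcal{L}$ over nonnegative radial Nehari functions on $B_{r_1}$), observes that $v^{(1)}=v|_{B_{r_1}}$ is admissible so $\mathcal{C}(r_1)\le\mathcal{L}(v|_{B_{r_1}})$, then invokes Theorem \ref{Theorem 1} and Proposition \ref{Lemma energy estimate1} \emph{on the ball $B_{r_1}$} to produce an exact radial sign-changing solution $w$ there with two nodal domains and $\mathcal{L}(w)<\mathcal{C}(r_1)+\frac{1}{N}\mathcal{S}^{\frac{N}{2}}$, and finally sets $\bar w=w$ on $B_{r_1}$ and $\bar w=-v$ on $B_R\setminus B_{r_1}$. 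Then $\bar w\in\mathcal{M}_{k+1}$ and $\mathcal{B}_{k+1}\le\mathcal{L}(w)+\mathcal{L}(v|_{B_R\setminus B_{r_1}})<\mathcal{B}_k+\frac{1}{N}\mathcal{S}^{\frac{N}{2}}$. All of the bubble analysis, the Nehari projection, and the sign bookkeeping are already packaged inside the results of Sections \ref{Sect2}--\ref{Sect3}, so the proof is a few lines. Note also that this argument uses nothing about $v$ beyond $v\in\mathcal{M}_k$ and $\mathcal{L}(v)=\mathcal{B}_k$.

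Measured against that, your proposal has concrete gaps rather than just extra length. First, membership in $\mathcal{M}_{k+1}$ requires the strict annular structure: the function must be of one sign on each of $k+1$ consecutive annuli. You therefore need $-\alpha_0\psi_\varepsilon+\alpha_1 v^{(1)}$ to change sign \emph{exactly once} on $(0,r_1)$, uniformly over the parameter box on which you run the degree argument; you assert this but do not prove it, and you cannot borrow it from Proposition \ref{Lemma energy estimate1}, whose set $\mathcal{M}$ only requires $u^{\pm}\in\mathcal{N}$ with no geometric constraint on the nodal set. Second, the projection onto $\mathcal{M}_{k+1}$ involves two coupled Nehari constraints on the inner ball whose nodal radius $r_1'$ itself depends on $(\alpha_0,\alpha_1,\varepsilon)$; you correctly identify this as the delicate step but only sketch a ``nearly triangular'' decoupling, so the heart of your argument is left unproved. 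Third, the $L^\infty$ and $C^2$ bounds on $v^{(1)}$ that you need for the cross-term estimates of Lemma \ref{Lemma ES-4} do not follow from the lemma's hypothesis: a minimizer of $\mathcal{L}$ over $\mathcal{M}_k$ is not a priori a critical point, so ``elliptic regularity for the minimizer'' is not available at this stage (it is only established in Step 3 of the proof of Theorem \ref{Theorem 2}). The paper's proof avoids all three issues, which is precisely why it is structured the way it is.
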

	\begin{proof}
		The proof is inspired by \cite{Cerami-Solimini-Struwe 1986}, we give the details here for the reader's convenience. Let $v\in \mathcal{M}_k$ satisfy  $\mathcal{L}(v)=\mathcal{B}_k$. Denote   the first node of $v$ by $B_{r_1}:=\lbr{x\in B_R:|x|\leq r_1} $ and define the energy level by
		\begin{equation}
			\begin{aligned}
				\mathcal{C}  ({r_1})=\inf &\biggl\{ \mathcal{L}(u): u\in H_{rad}(B_{r_1}), u\geq 0, u\not \equiv 0 \text{ in } B_{r_1}, \text{ and } \\
				&\int_{B_{r_1}} |\nabla u|^2=\lambda\int_{B_{r_1}}|u|^2+ \int_{B_{r_1}}|u|^{2^*}+\theta\int_{B_{r_1}}u^2\log u^2 \biggr\}.
			\end{aligned}
		\end{equation}
		Then  we have
		\begin{equation}\label{s11}
			\mathcal{C}  ({r_1}) \leq \mathcal{L}(v|_{B_{r_1}}).
		\end{equation}
		
		By Theorem \ref{Theorem 1}, there exists a sign-changing solution $w$ of equation \eqref{System1} in $H_{rad}(B
		_{r_1})$ and for some $0<r<r_1$, such that $w(x)>0$, if $0<|x|<r$,  $w(x)<0$, if $ |x|>r$, and $w(x)=0$, if $|x|=r$. Moreover, by Proposition \ref{Lemma energy estimate1}, we have
		\begin{equation}\label{s12}
			\mathcal{L}(w)<\mathcal{C}  ({r_1}) +\frac{1}{N} \mathcal{S}^{\frac{N}{2}}.
		\end{equation}
		Now we define
		\begin{equation}
			\bar{w}(x):=
			\begin{cases}
				w(x), &\text{ if } x \in B_{r_1},\\
				-v(x), &\text{ if }   x\in  B_R\setminus  B_{r_1}.
			\end{cases}
		\end{equation}
		Then we can easily see that $\bar{w} \in \mathcal{M}_{k+1}$, and by \eqref{s11}, \eqref{s12}, we have
		\begin{equation}
			\mathcal{B}_{k+1} \leq \mathcal{L}(\bar{w})=\mathcal{L}(w)+\mathcal{L}(v|_{B_R\setminus  B_{r_1}})< \mathcal{L}(v)+ \frac{1}{N} \mathcal{S}^{\frac{N}{2}}=	\mathcal{B}_{k }+ \frac{1}{N} \mathcal{S}^{\frac{N}{2}}.
		\end{equation}
		This completes the proof.
	\end{proof}
	Now we proceed to prove Theorem \ref{Theorem 2}.
	\begin{proof}[\bf Proof of Theorem \ref{Theorem 2}]
		The main ideas of this proof come from \cite{Cerami-Solimini-Struwe 1986,W.Shuai=Nonlineariyu=2019}, but  the presence of critical and logarithmic terms makes the  situations    different. We will prove by mathematical induction on the number of  the nodes  in  the equation \eqref{System1} that for each $k\in \N$, there exists $u\in \mathcal{M}_k$ such that
		\begin{equation}\label{s17}
			\mathcal{L}(u)=\mathcal{B}_k.
		\end{equation}
		By \cite{Deng-He-Pan=Arxiv=2022}, the conclusion is obviously true for $k=1$. Let $k>1$ and  suppose by induction  hypothesis that  Theorem \ref{Theorem 2} holds true for $k-1$. Let $\lbr{ u_n }\subset \mathcal{M}_{k}$ be a minimizing sequence such that
		\begin{equation}
			\lim_{n\to \infty} \mathcal{L}(u_n)=\mathcal{B}_k.
		\end{equation}
		Then for any $n\in \N$. there exists $k$ nodal domains of $u_n$, that is,
		\begin{equation}
			0=:r_0^n<r_1^n<r_2^n<\ldots<r_k^n:=R.
		\end{equation}
		Passing to a subsequence if necessary, we may assume that
		\begin{equation}
			r_j:=\lim_{n\to \infty} r_j^n, \quad \text{ for all }  ~   j=0,1,\ldots,k.
		\end{equation}
		Obviously, we have
		\begin{equation}
			0=r_0\leq r_1\leq r_2\leq \ldots\leq r_k=R.
		\end{equation}
		\medbreak
		{\bf Step 1.} We  prove that all the inequalities above are  in fact  strict.
		\medbreak
		Suppose by contradiction that there exists $i\in \lbr{1,2,\ldots,k}$ such that
		\begin{equation}
			r_{i-1}=r_i.
		\end{equation}
		For any $n\in \N$, we define  $u_n^{(j)}:=u_n$ in $\Omega_j^n$ and $u^{(j)}_n:=0$ outside $\Omega_j^n$, where $\Omega_j^n=\lbr{x\in \Omega: r_{j-1}^n<|x|\leq r_{j }^n }$. Then we have meas$(\Omega_i^n)\to 0$ as $n\to \infty$, and  from the H\"{o}lder's inequality  we know that for any $q\in \left[ 2,2^*\right) $,
		\begin{equation}\label{s13}
			|u_n^{(i)}|_q/|u_n^{(i)}|_{2^*}\to 0 , ~~\text{ as } ~ n \to \infty.
		\end{equation}
		However, since $u_n\in \mathcal{M}_k$,   we deduce from the Sobolev embedding theorem and the inequality $s^2 \log s^2 \leq (2^*-1)^{-1}e^{-1}s^{ 2^*}, s\in \sbr{0,+\infty} $ that
		\begin{equation}\label{s14}
			\begin{aligned}
				\mathcal{S}\sbr{\int_{\Omega_i^n}|u^{(i)}_n |^{2^*} }^{\frac{2}{2^*}}& \leq \int_{\Omega_i^n} |\nabla u^{(i)}_n|^2= \int_{\Omega_i^n}|u^{(i)}_n|^{2^*}+\theta\int_{\Omega_i^n}(u^{(i)}_n)^2\log \sbr{e^{\frac{\lambda }{\theta }}(u^{(i)}_n)^2 }\\& \leq C \int_{\Omega_i^n}|u^{(i)}_n |^{2^*} ,
			\end{aligned}
		\end{equation}
		where $C=C(\lambda,\theta,N)>0$ is  a constant independent of $n$. By \eqref{s13} we know that  $|u_n^{(i)}|_q=o_n(1)$ for any $q\in \left[ 2,2^*\right) $.  Using the inequality $s^2\log s^2\leq C_q s^{q}$ for $s>0$ and  $q\in \left(  2,2^*\right) $, we also know that  $$\int_{\Omega_i^n} (u^{(i)}_n)^2\log   (u^{(i)}_n)^2  =o_n(1). $$
		Combining these with \eqref{s14}, we can easily see that
		\begin{equation}
			\liminf_{n\to \infty}\int_{\Omega_i^n}|u^{(i)}_n |^{2^*}  \geq \mathcal{S}^\frac{N}{2}.
		\end{equation}
		Then  by  Lemma \ref{Lemma N nodes energy estimate},
		\begin{equation}
			\begin{aligned}
				K:=	\liminf_{n\to \infty}\mathcal{L}(u^{(i)}_n)&=\liminf_{n\to \infty}\mbr{\mathcal{L}(u^{(i)}_n)-\frac{1}{2}\mathcal{L}^\prime(u^{(i)}_n)u^{(i)}_n}\\&=\liminf_{n\to \infty} \mbr{\frac{ 1}{N}\int_{\Omega_i^n}|u^{(i)}_n|^{2^*}+\frac{\theta}{2}\int_{\Omega_i^n}|u^{(i)}_n|^{2}}
				\\& \geq \frac{ 1}{N}\mathcal{S}^\frac{N}{2}>\mathcal{B}_{k}-\mathcal{B}_{k-1}.
			\end{aligned}
		\end{equation}
		Take $\varepsilon=\sbr{K-\mathcal{B}_{k}+\mathcal{B}_{k-1}}/2>0$, and choose $n_0\in \N$ such that
		\begin{equation}
			|\mathcal{L}(u^{(i)}_{n_0})-K|<\varepsilon, ~\text{ and }~	|\mathcal{L}(u_{n_0})-\mathcal{B}_k|<\varepsilon.
		\end{equation}
		Let
		\begin{equation}
			\hat{u}(x)=\begin{cases}
				u^{(j)}_{n_0}, &\quad  \forall x\in \Omega_j^{n_0}, ~\text{ if } j<i,\\
				0, &\quad \forall x\in \Omega_i^{n_0},\\
				-	u^{(j)}_{n_0}, &\quad  \forall x\in \Omega_j^{n_0}, ~\text{ if } j>i.
			\end{cases}
		\end{equation}
		Then  $	\hat{u}\in \mathcal{M}_{k-1}$ and therefore
		\begin{equation}
			\begin{aligned}
				\mathcal{B}_{k-1}\leq \mathcal{L}(	\hat{u})&=\mathcal{L}(u_{n_0})-\mathcal{L}(u^{(i)}_{n_0})
				<\mathcal{B}_{k }-K+2\varepsilon =\mathcal{B}_{k-1},
			\end{aligned}
		\end{equation}
		which is a contradiction. Therefore, we have
		\begin{equation}
			0=r_0< r_1<r_2< \ldots<r_k=R.
		\end{equation}	
		Define
		\begin{equation}
			\begin{aligned}
				&\bar{\Omega}_1:=\lbr{x\in B_R:|x|\leq r_1}\\
				&	\bar{\Omega}_j:=\lbr{x\in B_R:r_{j-1}< |x|\leq  r_j}, \quad j=2,\ldots,k,
			\end{aligned}
		\end{equation}
		and  by $\bar{u}_j$ the positive function of \eqref{System1} on $\bar{\Omega}_j$ such that
		\begin{equation}
			\begin{aligned}
				\mathcal{L}(\bar{u}_j)= \mathcal{C}  (\bar{\Omega}_j )=\inf_{u\in \mathcal{N}(\bar{\Omega}_j)}   \mathcal{L}(u),
			\end{aligned}
		\end{equation}
		where
		\begin{equation}
			\mathcal{N}(\bar{\Omega}_j):=\biggl\{: u\in H_{rad}(\bar{\Omega}_j ):  u\geq 0, u\not \equiv 0 \text{ in } \bar{\Omega}_j ,
			\int_{\bar{\Omega}_j } |\nabla u|^2=\lambda\int_{\bar{\Omega}_j }|u|^2+ \int_{\bar{\Omega}_j }|u|^{2^*}+\theta\int_{\bar{\Omega}_j }u^2\log u^2 \biggr\}.
		\end{equation}
		Now we construct $u\in  H_{rad}( B_R)$ as follows
		\begin{equation}\label{s15}
			u(x):=(-1)^{j-1}\bar{u}_j(x), \quad \forall x\in \bar{\Omega}_j, ~~ j=1,2,\ldots,k.
		\end{equation}
		Then one can easily see that $u\in \mathcal{M}_k$, and $\mathcal{L}(u)\geq \mathcal{B}_k$.
		\medbreak
		{\bf Step2.} We claim that $\mathcal{L}(u)\leq \mathcal{B}_k$.
		\medbreak
		Recall  that $\lbr{ u_n }\subset \mathcal{M}_{k}$ is  the minimizing sequence satisfying
		\begin{equation}
			\lim_{n\to \infty} \mathcal{L}(u_n)=\mathcal{B}_k.
		\end{equation}
		For every $j\in \lbr{1,\ldots,k}$ fixed and $n\in\N$, we define
		\begin{equation}
			v_n^{(j)}(x):=u_n^{(j)}(\frac{r_j^n}{r_j}x), \quad \forall x\in \bar{\Omega}_j.
		\end{equation}
		Then there exists $s_n>0$ such that $w_n^{(j)}(x):=s_nv_n^{(j)}(x)\in \mathcal{N}(\bar{\Omega}_j)$, which implies that $\mathcal{L}(w_n^{(j)})\geq \mathcal{C}  (\bar{\Omega}_j )$ and
		\begin{equation}
			\begin{aligned}
				0&=	\int_{\bar{\Omega}_j } \sbr{|\nabla w_n^{(j)}|^2-\lambda|w_n^{(j)}|^2- |w_n^{(j)}|^{2^*}-\theta (w_n^{(j)})^2\log (w_n^{(j)})^2	} d x\\
				&= \int_{\Omega_j^n} \mbr{\sbr{\frac{s_nr_j^n}{r_j}}^2|\nabla u_n^{(j)}|^2-s_n^2\lambda |u_n^{(j)} |^2-s_n^{2^*}|u_n^{(j)}|^{2^*}-\theta (s_nu_n^{(j)})^2\log (s_nu_n^{(j)})^2}\sbr{\frac{ r_j}{r_j^n} }^N dx.
			\end{aligned}
		\end{equation}
		It follows from $u_n^{(j)} \in \mathcal{N}(\Omega_j^n)$ that
		\begin{equation}
			0=\int_{\Omega_j^n} \mbr{|\nabla u_n^{(j)}|^2-\lambda |u_n^{(j)} |^2-|u_n^{(j)}|^{2^*}-\theta (u_n^{(j)})^2\log (u_n^{(j)})^2}dx.
		\end{equation}
		Combining this with the fact that   $r_j=r_j^n+o_n(1)$,  by a standard argument we have  $s_n=1+o_n(1)$, which implies that
		\begin{equation}
			\liminf_{n\to \infty}\mathcal{L}(u_n^{(j)})= \liminf_{n\to \infty}\mathcal{L}(w_n^{(j)})\geq \mathcal{C}  (\bar{\Omega}_j )=\mathcal{L}(\bar{u}_j), \quad \text{ for every } j=1,\ldots, k.
		\end{equation}
		Then by the construction of $u$ (see \eqref{s15}), we deduce that
		\begin{equation}
			\mathcal{B}_k=\lim_{n\to \infty} \mathcal{L}(u_n)=\lim_{n\to \infty}\sum_{j=1}^{k}\mathcal{L}(u_n^{(j)})\geq \sum_{j=1}^{k}\mathcal{L}(\bar{u}_j) =\mathcal{L}(u).
		\end{equation}
		\medbreak
		{\bf Step3.} We prove that the function  $u$  is a solution of equation \eqref{System1} satisfying  $\mathcal{L}(u)=\mathcal{B}_k$.
		\medbreak
		
		It follows from the the construction of $u$ and the conclusion of Step2  that
		
		\begin{equation}
			\mathcal{L}(u)=\mathcal{B}_k.
		\end{equation}
		Obviously, $u$ is radial. Then we set $|x|=r$  and rewrite $u(|x|)$ by $u(r)$. Then  $u(r)$ satisfies
		\begin{equation}\label{s16}
			-(r^{N-1}u')'=r^{N-1}(\lambda u+ |u|^{2^*-2}u+u\log u^2)
		\end{equation}
		on the set $\mathcal{U}:=\lbr{r\in \sbr{0,R}: r\neq r_j, ~j=1,\ldots,k-1}$,  where $'$ denotes $\frac{d}{dr}$. Moreover, by  \cite{Deng-He-Pan=Arxiv=2022}, we know that $u$ is of class $C^2$ on $\mathcal{U}$.
		
		It remains to show that $u(r)$ satisfies \eqref{s16} for all $r\in \sbr{0,R}$. For that purpose, we will
		show
		\begin{equation} \label{s18}
			\lim_{r\to r_j^+} u'(r)=\lim_{r\to r_j^-} u'(r), \text{ for all  }j\in\lbr{1,\ldots,k-1}.
		\end{equation}
		Suppose by contradiction that there exists some $i\in \lbr{ 1,\ldots,k-1}$ such that   $u'_+\neq u'_-$, where
		\begin{equation} \label{s20}
			u'_{\pm}: =	\lim_{r\to r_i^{\pm}} u'(r).
		\end{equation}
		We may assume that $u(r)\geq 0$ for $ r\in \mbr{r_{i-1},r_i}$, and $u(r)\leq 0$ for $ r\in \mbr{r_{i},r_{i+1}}$. Take $\delta>0$ small and define the function   $v:\mbr{r_{j-1},r_{j+1}} \to \R$ as following
		\begin{equation}
			v(r)=\begin{cases}
				u(r), &\text{ if }  |r-r_j|\geq \delta,\\
				u(r_j-\delta)+\cfrac{u(r_j+\delta)+u(r_j-\delta)}{2\delta}(r-r_j+\delta), &\text{ if }  |r-r_j|\leq \delta.
			\end{cases}
		\end{equation}
		It is easy to see that $v\in C[r_{j-1},r_{j+1}]$ and there exists $c_0=c_0(\delta)\in \sbr{r_{j}-\delta,r_j+\delta}$ such that $v(c_0)=0$ for each  $\delta>0$ small.  Then, there exist  $s=s(\delta)>0$ and $t=t(\delta)>0$ such that
		\begin{equation}
			sv^+\in \mathcal{N}(\mbr{r_{j-1},c_0}), \quad  tv^-\in \mathcal{N}(\mbr{ c_0,r_{j+1}}).
		\end{equation}
		Moreover, one can easily see that
		\begin{equation}\label{s21}
			\lim_{\delta\to0^+}s(\delta)=\lim_{\delta\to0^+} t(\delta)=1.
		\end{equation}
		Define
		\begin{equation}\label{s19}
			z(r)=\begin{cases}
				sv^+(r), &\text{ if  }~ r\in \mbr{r_{j-1},c_0},\\
				tv^-(r), &\text{ if } ~ r \in \mbr{ c_0,r_{j+1}}.
			\end{cases}
		\end{equation}
		Consider the following functional
		\begin{equation}
			\mathcal{L}_j(z):=\int_{r_{j-1}}^{r_{j+1}} \mbr{\frac{1}{2}( z')^2-\frac{\lambda}{2} |z|^2-\frac{1}{2^*}|z|^{2^* }-\frac{\theta}{2}z^2(\log z^2-1)}r^{N-1} ~dr.
		\end{equation}
		Since $s^2\log s^2-s^2\geq t^2\log t^2- t^2+(s^2-t^2)\log t^2$ for $st>0$, then
		\begin{equation}
			\begin{aligned}
				&\sbr{\int_{r_{j-1}}^{r_j-\delta}+\int_{r_j+\delta}^{r_{j+1}}}\mbr{\frac{1}{2}( z')^2-\frac{\lambda}{2} |z|^2-\frac{1}{2^*}|z|^{2^* }-\frac{\theta}{2}z^2(\log z^2-1)}r^{N-1} ~dr\\
				&\leq \sbr{\int_{r_{j-1}}^{r_j-\delta}+\int_{r_j+\delta}^{r_{j+1}}}\mbr{\frac{1}{2}( z')^2-\frac{\lambda}{2}  |z|^2-\frac{1}{2^*}|z|^{2^* }-\frac{\theta}{2}u^2(\log u^2-1)+\frac{\theta\sbr{u^2-z^2}}{2}\log u^2}r^{N-1} ~dr.
			\end{aligned}
		\end{equation}
		Since $ u$ satisfies equation \eqref{s16}, we have
		\begin{equation}
			\begin{aligned}
				\mathcal{L}_j(z)
				&\leq \mathcal{L}_j(u)+\sbr{\int_{r_{j-1}}^{r_j-\delta}+\int_{r_j+\delta}^{r_{j+1}}} \mbr{\frac{1}{2}( z')^2-\frac{\lambda}{2}  |z|^2-\frac{1}{2^*}|z|^{2^* }-\frac{\theta}{2}z^2 \log u^2-\frac{1}{N}|u|^{2^*}}r^{N-1} ~dr\\
				&+\int_{r_j-\delta}^{r_j+\delta} \mbr{\frac{1}{2}( z')^2-\frac{\lambda}{2} |z|^2-\frac{1}{2^*}|z|^{2^* }-\frac{\theta}{2}z^2(\log z^2-1)-\frac{1}{N}|u|^{2^*}-\frac{\theta}{2}u^2}r^{N-1} ~dr\\
				&=:\mathcal{L}_j(u)+I_1(\delta)+I_2(\delta).
			\end{aligned}
		\end{equation}
		Then we deduce from \eqref{s16} and \eqref{s19} that
		\begin{equation}\label{s24}
			\begin{aligned}
				&\int_{r_{j-1}}^{r_j-\delta}\mbr{\frac{1}{2}( z')^2-\frac{\lambda}{2}  |z|^2-\frac{1}{2^*}|z|^{2^* }-\frac{\theta}{2}z^2 \log u^2-\frac{1}{N}|u|^{2^*}}r^{N-1} ~dr\\
				&= \int_{r_{j-1}}^{r_j-\delta} \mbr{\frac{1}{2}s^2( u')^2-\frac{\lambda }{2}s^2|u|^2-\frac{s^{2^*}}{2^*}|u|^{2^* }-\frac{\theta}{2}s^2 u^2 \log u^2-\frac{1}{N}|u|^{2^*}}r^{N-1} ~dr\\
				&=\int_{r_{j-1}}^{r_j-\delta} \frac{s^2}{2}\mbr{ ( u')^2r^{N-1}+ (r^{N-1}u')u}  +\mbr{ \sbr{\frac{s^2}{2}-\frac{s^{2^*}}{2^*}} |u|^{2^*}-\frac{1}{N}|u|^{2^*}}r^{N-1}  ~dr\\
				&\leq \int_{r_{j-1}}^{r_j-\delta} \frac{s^2}{2}\mbr{ ( u')^2r^{N-1}+ (r^{N-1}u')u} ~dr
				=\frac{s^2}{2} (r_j-\delta)^{N-1}u'(r_j-\delta)u(r_j-\delta).
			\end{aligned}
		\end{equation}
		Since $u(r_j)=0$ and $(r^{N-1}u')'(r_j)=0$,  we have
		\begin{equation}
			u(r_j-\delta)=-\delta u_-'+o(\delta),  \quad (r^{N-1}u') (r_j-\delta)=r_j^{N-1}u_-'+o(\delta),
		\end{equation}
		where $u_-'$  is given by \eqref{s20}.  Then we can deduce from  \eqref{s21}, \eqref{s24} that
		\begin{equation}\label{s22}
			\begin{aligned}
				&	\int_{r_{j-1}}^{r_j-\delta}\mbr{\frac{1}{2}( z')^2-\frac{\lambda}{2}  |z|^2-\frac{1}{2^*}|z|^{2^* }-\frac{\theta}{2}z^2 \log u^2-\frac{1}{N}|u|^{2^*}}r^{N-1} ~dr \leq -\frac{ r_j^{N-1}\delta}{2} (u_-')^2+o(\delta).
			\end{aligned}
		\end{equation}
		Similarly, we can prove
		\begin{equation}\label{s23}
			\begin{aligned}
				&\int_{r_j+\delta}^{r_{j+1}} \mbr{\frac{1}{2}( z')^2-\frac{\lambda}{2}  |z|^2-\frac{1}{2^*}|z|^{2^* }-\frac{\theta}{2}z^2 \log u^2-\frac{1}{N}|u|^{2^*}}r^{N-1} ~dr \leq
				-  \frac{ r_j^{N-1}\delta}{2} (u_+')^2+o(\delta).
			\end{aligned}
		\end{equation}
		With these facts in mind, and observing that
		\begin{equation}
			\begin{aligned}
				\int_{r_j-\delta}^{r_j+\delta}( z')^2r^{N-1} ~dr&=\frac{\mbr{u(r_j+\delta)-u(r_j-\delta)}^2}{4\delta^2}\sbr{ \frac{(r_j+\delta)^N}{N}-\frac{(r_j-\delta)^N}{N}}\\&=\frac{r_j^{N-1}\delta}{2}\sbr{u_+'+ u_-'}^2+o(\delta),
			\end{aligned}
		\end{equation}
		and
		
		\begin{equation}
			\int_{r_j-\delta}^{r_j+\delta} \mbr{\frac{\lambda}{2} |z|^2+\frac{1}{2^*}|z|^{2^* }+\frac{\theta}{2}z^2(\log z^2-1)+\frac{1}{N}|u|^{2^*}+\frac{\theta}{2}u^2}r^{N-1} ~dr=o(\delta),
		\end{equation}
		we have
		\begin{equation}
			\mathcal{L}_j(z)\leq \mathcal{L}_j(u)-\frac{r_j^{N-1}\delta}{4}\sbr{u_+'- u_-'}^2+o(\delta)<\mathcal{L}_j(u),
		\end{equation}
		for $\delta >0$ small. However, by the construction of $u$ and $z$, we can see that $\mathcal{L}_j(u)\leq \mathcal{L}_j(z)$, which is a contradiction. Hence, we have \eqref{s18} holds, and the function  $u$  is a solution of equation \eqref{System1} satisfying  $\mathcal{L}(u)=\mathcal{B}_k$. We complete the proof.
	\end{proof}

\end{document}